\DeclareMathOperator{\Iso}{\mathrm{Iso}}
\newtheorem{theorem}{Theorem}[section]
\newtheorem{corollary}[theorem]{Corollary}
\newtheorem{proposition}[theorem]{Proposition}
\newtheorem{remark}[theorem]{Remark}
\newtheorem{definition}[theorem]{Definition}
\newtheorem{example}[theorem]{Example}
\numberwithin{equation}{section}
\numberwithin{figure}{section}
\newcommand{\CM}{{\mathbb C}}
\newcommand{\NM}{{\mathbb N}}
\newcommand{\PM}{{\mathbb P}}
\newcommand{\RM}{{\mathbb R}}
\newcommand{\SM}{{\mathbb S}}
\newcommand{\ZM}{{\mathbb Z}}
\newcommand{\KM}{{\mathbb K}}
\newcommand{\EM}{{\mathbb E}}
\newcommand{\Bb}{{\mathcal B}}
\newcommand{\Ff}{{\mathcal F}}
\newcommand{\Gg}{{\mathcal G}}
\newcommand{\Cc}{{\mathcal C}}
\newcommand{\Ll}{{\mathcal L}}
\newcommand{\Hh}{{\mathcal H}}
\begin{document}
\title[Classifying the Dynamics of Architected Materials]{Classifying the Dynamics of Architected Materials by Groupoid Methods}

\author{Bram Mesland}

\address{Mathematisch Instituut \\ Universiteit Leiden  \\ Niels Bohrweg 1 \\ 2333CA Leiden, Netherlands
\\
\href{mailto:b.mesland@math.leidenuniv.nl}{b.mesland@math.leidenuniv.nl}}

\author{Emil Prodan}

\address{Department of Physics and
\\ Department of Mathematical Sciences 
\\Yeshiva University 
\\New York, NY 10016, USA \\
\href{mailto:prodan@yu.edu}{prodan@yu.edu}}

\date{\today}

\begin{abstract} 
We consider synthetic materials consisting of self-coupled identical resonators carrying classical internal degrees of freedom. The architecture of such material is specified by the positions and orientations of the resonators. Our goal is to calculate the smallest $C^\ast$-algebra that covers the dynamical matrices associated to a fixed architecture and adjustable internal structures. We give the answer in terms of a groupoid $C^\ast$-algebra that can be canonically associated to a uniformly discrete subset of the group of isometries of the Euclidean space. Our result implies that the isomorphism classes of these $C^\ast$-algebras split these architected materials into classes containing materials that are identical from the dynamical point of view. 
\end{abstract}

\thanks{This work was supported by the U.S. National Science Foundation through the grants DMR-1823800 and CMMI-2131760 and by U.S. Army Research Office through contract W911NF-23-1-0127.}

\maketitle


\setcounter{tocdepth}{1}

\section{Introduction and main statements}
\label{Sec:Introduction}

The science of architected materials is a branch of the science of synthetic materials that can be defined as the art of achieving full control over the classical degrees of freedom of the material and of their couplings. With such control, an architected material can be ``programmed'' to perform specific functions. The attribute ``architected'' mainly refers to the spatial arrangements of the building blocks of the material and, typically, these building blocks have adjustable internal structures to facilitate various degrees of programming. In an era dominated by information technology, the main applications of such materials are related to sensing, information storage and information processing. All these applications rely on specific dynamical features supported by a given architecture and a central problem in the field is to determine which new architectures bring genuine novelty. More specifically, given two architectures and the means to modify the internal structure of the resonators, one will like to know if there are dynamical effects that can be produced with one architecture but not with the other. This problem is equivalent to classifying the dynamical matrices that can be generated under various architectures. Its solution can be found in this work for a broad class of architected materials in the quadratic regime (as defined in section~\ref{Sec:QR}).

We build on two seminal papers of Bellissard \cite{Bellissard1986} and Kellendonk \cite{KellendonkRMP95} that pointed out that atomic systems have canonical underlying $C^\ast$-algebras where the dynamics of electrons can be formalized, analyzed and classified. This was an important development in condensed matter physics because it unleashed many of the techniques specific to operator algebras, such as $K$-theory and noncommutative geometry \cite{ConnesBook}, thus bringing much needed analytical tools to the difficult field of aperiodic solids \cite{Bellissard1995,Bellissard2003}. For example, the observed quantized plateaus in the integer quantum Hall effect and the so called bulk boundary-correspondence were rigorously explained using such tools \cite{BellissardJMP1994,KellendonkRMP2002} (see also \cite{BourneAHP2020,BourneJPA2018,MeslandCMP2022}, for more recent developments engaging groupoid frameworks). These ideas penetrated also the world of architected materials, where the dynamics is carried by classical degrees of freedom (see {\it e.g.} \cite{ApigoPRM2018,ApigoPRL2019,NiCP2019,ChengPRL2020,
ChenPRApplied2021,RosaNC2021,ChenNatComm2021}). The operator algebraic framework has been instrumental in these applications for understanding global aspects of the dynamics, establishing connections between various architectures and for predicting novel effects. In the present work, we focus on architected materials that are assembled from copies of a fixed part, called here the seed resonator. As opposed to the atomic systems, where the nuclei are featureless,\footnote{At the energy scales operated in condensed matter physics.} these resonators have shapes and orientations and the Bellissard-Kellendonk formalism, which engages only the group of pure translations, is no longer optimal for this context. The present work was motivated by the need of a more general formalism, where the orientations of the resonators are treated more effectively. As we shall see, the architected materials dealt with here can be formalized as point patterns inside the full group ${\rm Iso}(\EM^d)$ of Euclidean isometries. Besides being a non-abelian group, ${\rm Iso}(\EM^d)$ contains rotations and reflections that move points over large distances, hence, formalizing the mentioned materials presents conceptual as well as technical challenges. 

The mathematical aspects of patterns in non-abelian groups have been steadily attracted an interest from the mathematics community in the past few years \cite{BjorklundDM2018,BjorklundAIF2020,BjorklundAM2021,
BjorklundPLMS2018,BjorklundJFA2021,BjorklundMZ2022,
BeckusArxiv2021}. In fact, during the completion of our work, Enstad, Raum and van Velthoven posted the works \cite{Enstad1Arxiv2022,Enstad2Arxiv2022} on frame theory for patterns in locally compact second countable (lcsc) groups. These works not only cleared for us all the technical challenges, but they also supplied the natural mathematical framework we were looking for. Using this input together with empirical facts, for materials consisting of discrete architectures of self-coupling identical resonators, such as mechanical resonators or acoustic and photonic cavities, here are the statements we have to communicate:

\begin{enumerate}[\noindent 1)]
\item The architecture of such a material is specified by a uniformly separated closed subset $\Ll$ of ${\rm Iso}(\EM^d)$ (see Def.~\ref{Def:US}). \vspace{0.2cm}

\item Every such closed set has a canonically associated lcsc \'etale groupoid $\Gg_\Ll$ and a separable $C^\ast$-algebra $C^\ast_r(\Gg_\Ll)$ \cite{Enstad1Arxiv2022}. \vspace{0.2cm}

\item The dynamical matrix generating the dynamics of the material in the quadratic regime defines a linear operator over the Hilbert space $\ell^2(\Ll,\CM^N)$, where $N$ is the number of internal degrees of freedom carried by the seed resonator. \vspace{0.2cm}

\item If the physics involved in the coupling of the resonators is Galilean invariant, then any dynamical matrix can be generated from a left regular representation of $\KM \otimes C^\ast_r(\Gg_\Ll)$, where $\KM$ is the algebra of compact operators. \vspace{0.2cm}

\item By changing the internal structure of the seed resonator, the dynamical matrices sample the entire self-adjoint section of $\KM \otimes C^\ast_r(\Gg_\Ll)$. Thus, $\KM \otimes C^\ast_r(\Gg_\Ll)$ is the {\it smallest} $C^\ast$-algebra that can account for the dynamics of the material. It can be canonically associated to an architected material by both mathematical and physical means. \vspace{0.2cm}

\item The isomorphism classes of the canonically associated $C^\ast$-algebras sort the architected materials into classes of materials that are identical from dynamics point of view. \vspace{0.2cm}

\end{enumerate}

\begin{corollary} The research of this kind of architected materials essentially reduces to classifying the mentioned \'etale groupoids with respect to the equivalence relation introduced by Muhly, Renault, Williams \cite{MuhlyJOT1987}.
\end{corollary}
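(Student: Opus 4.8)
The plan is to read the corollary as a translation of statement (6) into the language of groupoids, using two classical bridges: the Brown--Green--Rieffel stabilization theorem and the Muhly--Renault--Williams equivalence theorem. First I would fix the invariant that actually classifies the materials. By statements (5) and (6), two architectures $\Ll_1$ and $\Ll_2$ produce dynamically identical materials precisely when $\KM \otimes C^\ast_r(\Gg_{\Ll_1}) \cong \KM \otimes C^\ast_r(\Gg_{\Ll_2})$. This is, by definition, the relation of \emph{stable isomorphism} between the reduced groupoid $C^\ast$-algebras $C^\ast_r(\Gg_{\Ll_1})$ and $C^\ast_r(\Gg_{\Ll_2})$; the presence of the tensor factor $\KM$ in the canonical algebra is exactly what makes the natural equivalence stable isomorphism rather than plain isomorphism.

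Next I would invoke the separability supplied by statement (2). For separable (hence $\sigma$-unital) $C^\ast$-algebras, the Brown--Green--Rieffel theorem asserts that stable isomorphism is equivalent to strong Morita equivalence. Applying this to $A = C^\ast_r(\Gg_{\Ll_1})$ and $B = C^\ast_r(\Gg_{\Ll_2})$ converts the dynamical-identity relation into strong Morita equivalence of the two reduced groupoid $C^\ast$-algebras, so that the classification problem for materials is rephrased entirely in terms of Morita equivalence classes of the algebras $C^\ast_r(\Gg_\Ll)$.

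The final step is the Muhly--Renault--Williams theorem \cite{MuhlyJOT1987}: equivalent second-countable locally compact Hausdorff groupoids with Haar systems have strongly Morita equivalent $C^\ast$-algebras. Since each $\Gg_\Ll$ is lcsc and \'etale (statement (2)), the counting measures along the range fibres furnish the Haar system and the hypothesis is met. One point to check is that the theorem is applied at the level of the \emph{reduced} algebras used throughout: this is automatic if the $\Gg_\Ll$ are (measurewise) amenable, in which case full and reduced algebras coincide, and otherwise one appeals to the reduced form of the equivalence theorem. Granting this, one obtains the implication that MRW-equivalent architectures are dynamically identical, and chaining the three steps reduces the classification of the materials to that of the groupoids $\Gg_\Ll$ up to MRW equivalence, as claimed.

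I expect the genuine difficulty to lie in the word \emph{essentially}, i.e. in the converse direction. The Muhly--Renault--Williams theorem supplies only the implication groupoid equivalence $\Rightarrow$ Morita equivalence; the reverse, that strong Morita equivalence of $C^\ast_r(\Gg_{\Ll_1})$ and $C^\ast_r(\Gg_{\Ll_2})$ forces MRW equivalence of the underlying groupoids, is a rigidity statement that fails for general groupoids and requires extra structure to recover. The honest reading is therefore that MRW equivalence is a \emph{sufficient} (and, for the tractable classes of \'etale groupoids arising here --- topologically principal or effective, carrying a canonical Cartan subalgebra \`a la Renault --- also necessary) invariant. Making this precise would mean identifying the regularity properties of $\Gg_\Ll$ under which a reconstruction theorem upgrades Morita equivalence of the algebras to equivalence of the groupoids, turning the reduction into an exact correspondence rather than a one-sided bound.
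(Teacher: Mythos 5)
Your proposal is correct and follows essentially the same route the paper intends: the corollary is stated without a formal proof, resting on statements 4)--6) together with the quoted Muhly--Renault--Williams proposition, and your chain (stable isomorphism of $\KM \otimes C^\ast_r(\Gg_\Ll)$, Brown--Green--Rieffel for the separable algebras $C^\ast_r(\Gg_\Ll)$, then MRW equivalence of the \'etale groupoids implying Morita equivalence of their reduced algebras) is precisely the implicit argument, with BGR merely left tacit in the paper. Your closing caveat that MRW equivalence is a priori only a sufficient condition, so the reduction is one-sided absent a reconstruction/rigidity theorem, matches the paper's own hedge in the word ``essentially'' and introduces no gap.
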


Furthermore, many dynamical effects in materials science are brought to life by deformations of materials, such as when one seeks topological pumping \cite{ChengPRL2020} or topological spectral flows \cite{Lux2Arxiv2022}. Many of these effects are predicted by means of topology and the biggest challenge for the field, in this respect, is the identification of the topological space in which the deformations of the materials occur. The stabilization of the groupoid $C^\ast$-algebra mentioned above gives the solution to this challenge if the deformations engage only the internal structures of the resonators or, more generally, if the deformations keep the material inside a fixed isomorphism class. If this is the case, then specialized tools developed for operator algebras can be employed to detect the dynamical effects that can be generated with an architected material (see section~\ref{Sec:Outlook}), generally with great effectiveness and precision. 

The paper is organized as it follows. Section~\ref{Sec:Emp} introduces the physical systems and specifies the assumptions made. They are by no means restrictive and they all can be straightforwardly incorporated in the designs of materials. For example, we argue that, to maintain control over the dynamics, the materials should  have finite coupling range and display certain continuity against deformations of their architectures. The section also explains how the configurations of these physical systems are connected to patterns inside ${\rm Iso}(\EM^d)$ and it derives the specific form of the dynamical matrices under the assumption of Galilean invariance, which represents the point of departure for the operator theoretic framework. The first part of section~\ref{Sec:Patt} reproduces from \cite{Enstad1Arxiv2022} the relevant facts about the topological space of closed subsets of lcsc groups and specific classes of patterns. It also introduces the related concepts of hull and transveral of a pattern. The second part of section~\ref{Sec:Patt} is devoted to examples, which, among other things, enable us to explain what has been gained by upgrading from the group of translations to the full group of Euclidean isometries. Section~\ref{Sec:DGM} introduces the canonical groupoid associated to a closed subset of a lcsc group and reproduces from \cite{Enstad1Arxiv2022} a fundamental result stating that this groupoid is \'etale if and only if the closed subset is uniformly separated. The rest of the section is devoted to the associated groupoid algebra and its relation to the space of dynamical matrices discussed in section~\ref{Sec:Emp}. It is in this part where the physical meaning of the groupoid $C^\ast$-algebra is established. The last section discusses possible research directions opened up by the formalism introduced by our work. Since we want to keep the message of this paper entirely focused on the synergy between the mathematical framework and the physics of architected materials, these research directions are only briefly mentioned and actual results along those lines will be presented elsewhere.

Lastly, we point out that there are several works that address point symmetries of tiled spaces (see {\it e.g.} \cite{RandPhDThesis,StarlingCMP2015} and references therein). There is, however, almost no overlap between these works and our, because the rotations considered in the former are emerging symmetries stemming from particular arrangements of featureless tiles ({\it i.e.} without internal structure). In our work, the rotations play an active role in the sense that they specify the internal structures the resonators of an ensemble, relative to that of a seed resonator. Patterns of resonators derived from actions of generic discrete groups on $\RM^d$ were considered by one of the authors in \cite{ProdanJGP2019}. This work also has little overlap with the present work because \cite{ProdanJGP2019} deals only with point-patterns of $\RM^d$ and the $C^\ast$-algebras of the dynamical matrices are crossed products rather than groupoid algebras. Furthermore, all these mentioned works are more about exploring specific interesting classes of materials and no attempts at a general classification are made.

\section{Empirical facts}\label{Sec:Emp}

This section supplies a precise description of the physical assumptions that make possible the theoretical predictions presented in the following sections. Along the way, this section lays out our recommendations for experimental physicists and engineers on how to build architected materials that fall under the umbrella of our framework. Regarding the exposition, our plan is to describe in details a rather specific class of physical systems, those of coupled mechanical resonators, and then supply a list other physical systems with similar attributes. 

\subsection{A laboratory setting}\label{Sec:Setting} A mechanical resonator is a confined mechanical system carrying a finite set $\bm q = \{q_1, \ldots, q_N\}$ of degrees of freedom and displaying a quadratic Lagrangean
\begin{equation}
L_0(\dot{\bm q},\bm q) = \tfrac{1}{2}\sum_{x \in \Ll} \left[\dot {\bm q}_x \cdot \widehat M_0 \cdot \dot {\bm q}_x^T - \bm q_x \cdot \widehat V_0 \cdot \bm q_x^T \right], \quad  \widehat M_0, \widehat V_0 \in M_N(\CM).
\end{equation}
Throughout, $M_N(\CM)$ denotes the algebra of $N \times N$ matrices with complex entries. The degrees of freedom will always be observed and quantified using equipment that is rigidly attached to the resonator or rather to the frame of the resonator. The equipment can be a sensing device or a simple video camera. Abstractly, this is conveyed by a co-moving frame and by a local research assistant that carries the observations and the measurements in such a rigidly attached frame. 

We will be concerned with finite and infinite clusters of identical resonators in the physical space $\EM^d$, $d=1,2,3$. Thus, apart from their locations and orientations, all resonators are identical to a {\it seed} resonator. In our framework, the identical resonators carry with them the same equipment, which is attached in exactly the same fashion on the frame of the resonator. As a consequence, the numerical values $\{q_1, \ldots, q_N\}$ recorded by the local research assistants are not affected by translations, rotations or reflections of the resonators. 

\begin{remark} {\rm The above details are quite central to our approach (see Remark~\ref{Re:CoMovF}) and should not be taken lightly. For example, what we are proposing here is quite different from the case of atomic systems, where the atomic orbitals are always quantified in a fixed laboratory frame, common to all atoms. 
}$\Diamond$
\end{remark}

The resonators come with their own force fields and a pair of resonators self-couple without any human intervention (see Examples~\ref{Ex:1} and \ref{Ex:2}). In abstract terms, this means that the Lagrangean $L$ of a pair or a cluster of resonators is entirely determined by the positions and orientations of the individual resonators. If $\Ll$ encodes the geometric configuration of a cluster of $r$ identical resonators, referred to as the architecture of the cluster, then the entire physics is encoded in the relation
\begin{equation}\label{Eq:L1}
\Ll \mapsto L_\Ll(\dot {\bm q}_1, \ldots, \dot {\bm q}_r,\bm q_1, \ldots, \bm q_r),  \quad \dot{\bm q}_i, \ \bm q_i \in \RM^N.
\end{equation}

\begin{remark} {\rm The relation~\eqref{Eq:L1} can be actually mapped experimentally (see {\it e.g.} \cite{ApigoPRM2018}) and this is highly recommended, especially if an experimental platform is used repeatedly. 
}$\Diamond$
\end{remark}

\begin{remark} {\rm Eq.~\eqref{Eq:L1} assumes that degrees of freedom belong to the specific manifold $\RM^N$. This is sufficient for our purposes, because we are interested only in the regime of small oscillations, where we can use a local chart of the configuration manifold of the seed resonator, to map its degrees of freedom into $\RM^N$.
}$\Diamond$
\end{remark}

\subsection{Formalizing the space of architectures}

 \begin{figure}[t]
\center
\includegraphics[width=0.9\textwidth]{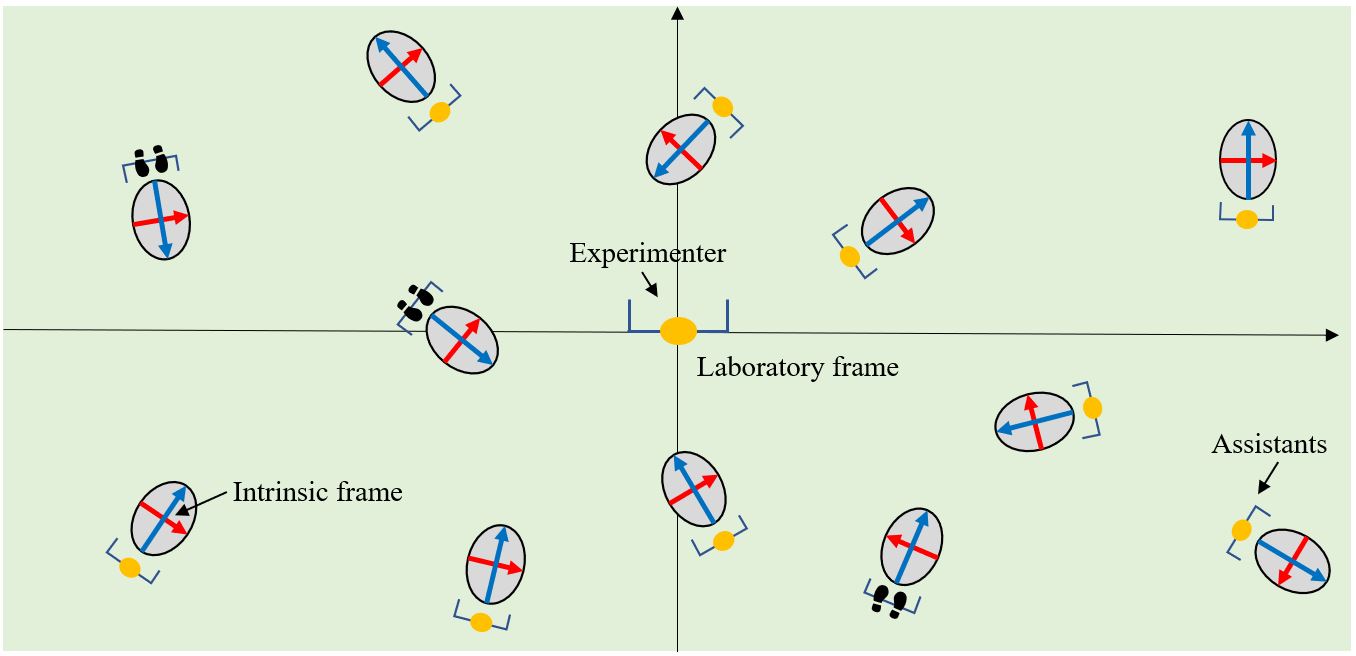}\\
  \caption{\small Example of a physical systems addressed in this work. It consist of identical self-coupling resonators placed at selected points of the plane and with selected orientations. Assistants observe and quantify the dynamics of the local degrees of freedom using a local frame rigidly attached to the resonators.
}
 \label{Fig:RL}
\end{figure}

Pictorially, the physical system we just described can be represented as in Fig.~\ref{Fig:RL}. We are going to exploit the fact that the resonators are identical, in which case the only distinction between two resonators in a cluster is reflected in the positioning of their intrinsic frames. If the laboratory frame is fixed once and for all, then the intrinsic frames of the resonators can be generated from the laboratory frame by applying a rotation followed by a translation. This simple line of reasoning reveals that the architecture $\Ll$ of a cluster of resonators is completely specified by a subset of the topological group ${\rm Iso}(\EM^d)$ of isometries. We will denote this subset by the same symbol $\Ll$. As indicated in Fig.~\ref{Fig:RL}, we include in our analysis proper as well as improper space transformations. 

Among other important things, this development enables us to label the resonators in a more natural way by elements of ${\rm Iso}(\EM^d)$. Specifically, we will write the degrees of freedom as $\{\bm q_x\}_{x \in \Ll}$. The right action of the group on itself will be indicated by a dot. Concretely,
\begin{equation}
{\rm Iso}(\EM^d) \ni g \mapsto g \cdot x : =  x g^{-1}, \quad \forall \  x \in {\rm Iso}(\EM^d).
\end{equation}
This action extends from points to subsets of ${\rm Iso}(\EM^d)$. Therefore, it makes sense to consider the action $g \cdot \Ll$ of the group on the architecture. The action of an isometry $g \in {\rm Iso}(\EM^d)$ on $p \in \EM^d$ will be indicated by the same symbol $g \cdot p$.

\begin{remark}{\rm The right action distinguishes itself from the left action of the group due to the following property: The configurations of the resonators labeled by $x$ and $g x$ drift apart from each other if $x$ wonders away from the neutral element and $g$ contains a rotation. This, however, is not the case if the right action of the group is used instead, as it can be easily seen by the rule of multiplication in ${\rm Iso}(\EM^d)$ shown in Appendix~\ref{Appendix}.}$\Diamond$
\end{remark}

\subsection{Galilean invariance} 

We now imagine an experimenter engaged in the process of mapping~\eqref{Eq:L1} and suppose the experimenter examines an architecture $\Ll'$ that is related to another architecture $\Ll$ by $\Ll' = g \cdot \Ll$, $g \in {\rm Iso}(\EM^d)$.  If the physics involved in the coupling of resonators is Galilean invariant, then the experimentally mapped Lagrangeans will enter the following relation
\begin{equation}\label{Eq:L2}
L_{\Ll'}\left(\{\dot{\bm q}_{g \cdot x}\}_{x \in \Ll},\{\bm q_{g \cdot x}\}_{x \in \Ll}\right) = L_\Ll \left(\{\dot{\bm q}_{g \cdot x}\}_{x \in \Ll},\{\bm q_{g \cdot x}\}_{x \in \Ll} \right).
\end{equation}

Note that the labeling of the resonators by group elements is quite instrumental here. Indeed, one should be aware that a linear indexing ({\it i.e.} some arbitrary ordering)  of the resonators cannot be made uniformly for all architectures $\Ll$, because certain deformations of $\Ll$ can switch two resonators. In such cases, it will become ambiguous which ordering to use. Quite the opposite, the indexing by the group elements fits naturally with the deformations of the architectures and this is why the Galilean invariance can be formulated as cleanly as in Eq.~\eqref{Eq:L2}. 

\subsection{Quadratic regime}\label{Sec:QR}  It is convenient to reference the Lagrangeans from the individual quadratic contributions of the uncoupled resonators. Hence, we write
\begin{equation}
L_\Ll = \tfrac{1}{2}\sum_{x \in \Ll} \left[\dot {\bm q}_x \cdot \widehat M_0 \cdot \dot {\bm q}_x^T - \bm q_x \cdot \widehat V_0 \cdot \bm q_x^T \right] - V_\Ll \big(\{\bm q_x\}_{x \in \Ll} \big),
\end{equation}
where $V_\Ll$ plays the role of the full coupling potential. In the regime of weak coupling, which we can be sure it occurs at least when the resonators are spread apart, the full potential $\sum_{x \in \Ll} \frac{1}{2} \bm q_x \cdot \widehat V_0 \cdot \bm q_x^T - V_\Ll \left(\{\bm q_x\}_{x \in \Ll} \right)$ has a unique minimum occurring at $\{\bar{\bm q}_x(\Ll)\}_{x \in \Ll}$ in the vicinity of $\{\bm q_x =0\}_{x \in \Ll}$. If we restrict the experimental observations to the regime of small oscillations around such minima, the Lagrangeans can be approximated by quadratic expressions
\begin{equation}
 L_\Ll = \tfrac{1}{2}\sum_{x \in \Ll}  \dot {\bm \xi}_x \cdot \widehat M_0 \cdot \dot {\bm \xi}_x^T  -  
\tfrac{1}{2} \sum_{x,x' \in \Ll}\, {\bm \xi}_{x} \cdot \widehat W_{x,x'}(\Ll) \cdot {\bm \xi}_{x'}^T ,
\end{equation} 
where $\bm \xi_x = \bm q_x - \bar{\bm q}_x$ and 
\begin{equation}
\left(\widehat W_{x,x'}(\Ll)\right) _{\alpha \beta} = \delta_{x,x'} \left( \widehat V_0\right)_{\alpha,\beta}+ \left . \frac{\partial^2 V_\Ll}{ \partial (\bm q_x)_\alpha \partial (\bm q_x')_\beta } \right |_{\bar{\bm q}(\Ll)} ,
\end{equation}
for $x,x'\in \Ll$. We will refer to these objects as the coupling matrices. 

A direct consequence of Eq.~\eqref{Eq:L2} is the following equivariance property of the coupling matrices:
\begin{equation}\label{Eq:EquiV1}
\widehat W_{g \cdot x,g \cdot x'} (g \cdot \Ll ) = \widehat W_{x,x'}(\Ll), \quad g \in {\rm Iso}(\EM^d),
\end{equation}
which holds whenever the physics involved in the coupling of the resonators is Galilean invariant. For example, this invariance will be broken in the presence of external fields or for couplings involving external frames and fixings.

\begin{remark}\label{Re:CoMovF}{\rm Our choice of observing and quantifying the degrees of freedom in a co-moving frame is prominently manifested in Eq.~\eqref{Eq:EquiV1}. Indeed, if instead the choice was to observe them from the fixed laboratory frame, then we had to specify how the degrees of freedom transform under the Euclidean transformations and the right side of Eq.~\eqref{Eq:EquiV1} would contain a conjugation by such transformation.
}$\Diamond$
\end{remark}

\begin{example}\label{Ex:1}{\rm The experimental platform introduced in \cite{ApigoPRM2018} uses a seed resonator that consists of a rigid body fitted with permanent magnets. The shape and mass distribution of the seed resonator as well as the positions of the permanent magnets define the internal structure. If the seed resonator aligned with the laboratory frame, hence the one labeled by the neutral element $e$, produces a magnetic field $\bm B_e(p)$ at the point $p \in \EM^3$, then the resonator labeled by $x=(t_x,\bm r_x) \in {\rm Iso}(\EM^3)$ (see Apendix~\ref{Appendix} for notation) produces at the same point $p$ a magnetic field 
\begin{equation}
\bm B_x(p)=\gamma(\bm r_x) \cdot \bm B_e(x^{-1} \cdot p),
\end{equation} 
where $\gamma$ represents the action of rotations on pseudo-vectors (or 2-forms). Now, let $\Ll$ be a finite point set in ${\rm Iso}(\EM^3)$ specifying a stable equilibrium configuration of an ensemble of such resonators. If $\tilde x$ is an out of equilibrium configuration of the resonator labeled by $x \in \Ll$, then we encode its internal degrees of freedom in the element $\bm q_x : = x^{-1} \tilde x \in {\rm Iso}(\EM^3)$.\footnote{We can view $\bm q_x$ as an element of $\RM^6$ by using a local chart of the Lie group ${\rm Iso}(\EM^d)$. Note that, in the actual experiments, $\bm q_x$ are constraint on sub-manifolds of ${\rm Iso}(\EM^3)$.} In the experimental conditions of \cite{ApigoPRM2018}, the potential energy of the mechanical system comes from the magnetic field's energy, written below in specific units,\footnote{The energy of an object carrying a saturated magnetization $\bm M$ in an external magnetic field $\bm B_{\rm ext}$ is $-\int_{\EM^3} {\rm} dp \, \langle \bm M(p),\bm B_{\rm ext}(p) \rangle$. Since the self-iteraction does not vary with the configuration of the object, $\bm B_{\rm ext}$ can promoted to the net magnetic field $\bm B_{\rm net}$. This together with the fact that $\int_{\EM^3} {\rm d} p \, \langle \bm B_{\rm net}(p) - 4 \pi \bm M(p),\bm B_{\rm net}(p) \rangle=0$, valid in the absence of free currents, gives Eq.~\eqref{Eq:ME} up to a multiplicative factor. The latter is absorbed in our chosen units for energy. } 
\begin{equation}\label{Eq:ME}
E = -\int_{\EM^3} {\rm d} p \, \langle \bm B_{\rm net}(p),\bm B_{\rm net}(p) \rangle, 
\end{equation} 
where $\langle \cdot,\cdot \rangle$ is the standard scalar product of $\RM^3$. The net magnetic field corresponding to a dynamical configuration ({\it i.e.} out of equilibrium) is a linear superposition 
\begin{equation}
\bm B_{\rm net}(p) =\sum_{x \in \Ll} \bm B_{x \bm q_x}(p)= \sum_{x \in \Ll} \gamma(\bm r_{x \bm q_x}) \cdot \bm B_e \big ( \bm q_x^{-1} x^{-1} \cdot p \big)
\end{equation} 
and the mechanical potential energy takes the form
\begin{equation}
V_\Ll  = -\int_{\EM^3} {\rm d} p \, \sum_{x,y \in \Ll} \big \langle \gamma(\bm r_{x \bm q_x}) \cdot \bm B_e( \bm q_x^{-1} x^{-1} \cdot p),\gamma(\bm r_{y \bm q_y}) \cdot \bm B_e( \bm q_y^{-1} y^{-1} \cdot p) \big \rangle.
\end{equation}
Using the invariance of the Lebesgue measure and of the scalar product against the actions of ${\rm Iso}(\EM^3)$, one can manually check the equivariance of the potential
\begin{equation}\label{Eq:L7}
V_{g \cdot \Ll}\left(\{{\bm q}_{g \cdot x}\}_{x \in \Ll}\right) = V_\Ll \left(\{\bm q_{g \cdot x}\}_{x \in \Ll} \right).
\end{equation}
In fact, using those simple facts, we can re-write the potential as
\begin{equation}
V_\Ll  = -\int_{\EM^3} {\rm d} p \, \sum_{x,y \in \Ll} \big \langle \gamma(\bm r_{\bm q_x}) \cdot \bm B_e(\bm q_x^{-1} \cdot p),\gamma(\bm r_{x^{-1} y \bm q_y }) \cdot \bm B_e( \bm q_y^{-1} y^{-1} x \cdot p) \big \rangle,
\end{equation}
hence making it manifestly equivariant.
}$\Diamond$
\end{example}

\begin{example}\label{Ex:2}{\rm The case of a seed resonator fitted with permanently electrically polarized components can be analyzed in a similar manner. Additional examples are discussed in section~\ref{Sec:OtherSys}.}$\Diamond$
\end{example}

\subsection{Dynamical matrix} From now on, we restrict the discussion to the quadratic regimes. Consider generalized driving forces 
\begin{equation}
\bm F_x(t)={\rm Re}\left[ (f_x^1,\ldots,f_x^N) e^{\imath \omega t}\right]={\rm Re} [ \bm f_x e^{\imath \omega t} ], \quad x \in \Ll,
\end{equation}
applied on each of the resonators. To describe the response of the cluster more effectively, we encode the driving forces in the vector $|\bm f \rangle = \sum_{x \in \Ll} \bm f_x \otimes |x\rangle$ from the Hilbert space $\CM^N \otimes \ell^2(\Ll)$. Up to a short transient motion, the system responds as 
\begin{equation}
\bm q_x(t) = \bar{\bm q}_x + \widehat M_0^{-\frac{1}{2}} \cdot {\rm Re}\left[  \bm \xi_x e^{\imath \omega t} \right],
\end{equation}
where $\bm \xi_x$ are the coefficients of the vector $|\bm \xi \rangle = \sum_{x \in \Ll} \bm \xi_x \otimes |x\rangle$ from $\CM^N \otimes \ell^2(\Ll)$ satisfying the equation
\begin{equation}\label{Eq:DynMat}
(D_\Ll - \omega^2\, I) |\bm \xi \rangle = \widehat M_0^{-\frac{1}{2}}\otimes I \, |\bm f \rangle, \quad  D_\Ll = \sum_{x,x' \in \Ll} w_{x,x'}(\Ll) \otimes |x\rangle \langle x'|,
\end{equation}
with
\begin{equation}\label{Eq:Ws}
w_{x,x'}(\Ll)=\widehat M_0^{-\frac{1}{2}} \, \widehat W_{x,x'}(\Ll) \, \widehat M_0^{-\frac{1}{2}} \in M_N(\CM).
\end{equation}
Since arbitrary time-dependent driving forces can be decomposed in Fourier components, the conclusion is that the response of the cluster to an external excitation reduces to the study of the spectral properties of the dynamical matrix $D_\Ll$, viewed as a linear operator over the Hilbert space $\CM^N \otimes \ell^2(\Ll)$.

\subsection{Controlling dynamics}\label{Sec:Contr} In the applications of architected materials, one needs control over the coupling matrices, that is, a researcher should be able to measure them with reasonable precision and be able to modify them according to the specifications of a given application. Here, we identify a class of dynamical matrices that have a positive chance to be controlled in a laboratory. Of course, these issues are relevant only for infinite architectures.

A coupling matrix $w_{x,x'}(\Ll)$ is said to have finite coupling range if it vanishes whenever $x' \cdot x$ is outside a fixed compact vicinity of the neutral element of ${\rm Iso}(\EM^d)$. We claim that coupling matrices that have finite range and depend continuously on $\Ll$ can be measured in a laboratory. Indeed, the equivariance relation~\eqref{Eq:EquiV1} assures us that it is enough to focus the observations only on $x,x'$ in a compact vicinity of the neutral element and, due to the assumed continuity, a good approximation of the map $\Ll \to w_{x,x'}(\Ll)$ can be derived by interpolating a finite sampling of $\Ll$'s. This assures us that, even for an infinite architecture, we still have a chance to measure with enough precision every single coupling matrix of the system. 

We can go one step beyond the class of systems described above. Indeed, any dynamical matrix that can be approximated by such dynamical matrices with finite coupling ranges can be also controlled in a laboratory. Of course, when mentioning approximations one needs to specify a norm on the dynamical matrices. The correct norm to consider here is the operator norm on $\CM^N \otimes \ell^2(\Ll)$, because convergence of dynamical matrices in this norm implies convergence of their resonant spectra. This gives us control over the spectral properties.

\subsection{A look ahead}\label{Sec:LAh} We encoded the dynamics of the clusters of resonators in the dynamical matrix $D_\Ll$. In the presence of Galilean invariance, the equivariance relation~\eqref{Eq:EquiV1} implies
\begin{equation}\label{Eq:EquiV2}
w_{g \cdot x,g \cdot x'} (g \cdot \Ll ) = w_{x,x'}(\Ll), \quad g \in {\rm Iso}(\EM^d),
\end{equation}
which can be used to reduce the expression of the dynamical matrix 
\begin{equation}\label{Eq:FinalD}
D_{\Ll} =\sum_{x,x' \in \Ll} w_{x' \cdot x,e}(x' \cdot \Ll) \otimes |x\rangle \langle x'|.
\end{equation}
Among other things, this shows that the entire dynamics is encoded in $M_N(\CM)$-valued map $w_{g,e} (\Ll)$, defined on tuples $(g,\Ll)$ with $g \in \Ll$. This together with the discussion in subsection~\ref{Sec:Contr} is the point of departure for the operator algebraic framework. Indeed, as we shall see in section~\ref{Sec:DGM}, any dynamical matrix as in Eq.~\eqref{Eq:FinalD} can be generated as the left-regular representation of an element from a stable, separable $C^\ast$-algebra canonically associated to $\Ll$. This is the stabilization of the groupoid $C^\ast$-algebra introduced in \cite{Enstad1Arxiv2022},  which generalizes Bellissard-Kellendonk construction from the context of the abelian group $\RM^d$ to that of general locally compact groups.  To formulate this $C^\ast$-algbera, we first need to give sense to the continuity of the map~\eqref{Eq:L1} and of the coupling matrices, that is, to specify natural topologies on the domains of these maps. In fact, the key to the operator theoretic formalism rests on the leap from the physical space to the topological space where the architectures $\Ll$ live. 

As already implied above, the $C^\ast$-algebra we just mentioned is invariant to modifications of the internal structure of the seed resonator. Furthermore, by changing this internal structure, one can modify the coupling matrices and explore the entire $C^\ast$-algebra. Thus, it is possible to identify this $C^\ast$-algebra with the smallest $C^\ast$-algebra that covers the dynamics of a lattice $\Ll$ of identical resonators with adjustable internal structure. This effort is justified because the representation, structure and K theories of this $C^\ast$-algebra filter out all dynamical effects achievable with such mechanical systems (see it at work in {\it e.g.} \cite{ApigoPRM2018}). Furthermore, upon deformations of the architecture $\Ll$, one can find many other mechanical systems that have isomorphic $C^\ast$-algebras and, as such, are equivalent from the dynamical point of view. The outstanding conclusion is that the world of architected materials can be divided in well-defined classes and, if we want to discover new dynamical effects in architected materials, the task is reduced to exploring different Morita equivalence classes of $C^\ast$-algebras.

\begin{remark}{\rm At the first sight, there might be an objection to our statement that we can actually achieve all available coupling matrices, given the fact that our exposition was carried in the weak coupling regime. Note, however, that the coupling matrices can be also manipulated via the inertia matrix $\widehat M_0$, which is not constrained in any way by the weak coupling assumption. In particular, large values of the coupling matrices~\eqref{Eq:Ws} can be generated by reducing the inertia of the resonators.
}$\Diamond$
\end{remark}  

\subsection{Other physical systems covered by analysis}\label{Sec:OtherSys} Mode-coupling theory is an effective theory applying to weakly coupled discrete resonators. It works in specific windows of frequencies, where only a finite number of resonant modes are active for the un-coupled resonators. Once these modes are identified, the theory assumes that the collective resonant modes can be approximated by linear combinations of the local resonant modes. In other words, the full Hilbert space of collective resonant modes is projected onto the subspace spanned by such linear combinations. This automatically leads to discrete dynamical matrices as in Eq.~\eqref{Eq:DynMat}. If the coupling of the resonators is mediated by a homogeneous medium, then the equivariance relation~\eqref{Eq:EquiV2} holds, hence, the dynamics of these systems is covered by our analysis. Concrete examples include coupled acoustic cavities \cite{ChenPRApplied2021}, patterning of surfaces \cite{WangSA2023}, coupled plate resonators \cite{Lux2Arxiv2022}, coupled electromagnetic cavities or photonic resonators \cite{ZilberbergNature2018}.

It is important, however, to stress that our analysis and predictions are not bound only to those systems. Discrete resonators can be coupled via man-designed bridges and other synthetic couplings. Our predictions still apply as long as these couplings are designed from the geometric information encoded in $\Ll$ and are insensitive to the rigid displacement and rotation of the coupled pairs. Thus, the task of an experimentalist will be to engineer the pairwise coupling matrices $w_{x,x'}(\Ll)$, by natural or synthetic means, such that they depend continuously on $x$ and $x'$, as well as on $\Ll$, and, in addition, satisfy the equivariance relation~\eqref{Eq:EquiV2}.

Given the discussion in the previous paragraph, we will go as far as saying that interesting laboratory models can be constructed from lattices of general locally compact groups (see \cite{Lux1Arxiv2022} for more details on such efforts). If one is interested in discovering new dynamical features in a systematic and controlled fashion, this is certainly a path worth exploring. The mathematical formalism of the next sections is developed for this very general context.

\section{The space of patterns and associated concepts}
\label{Sec:Patt}

The work of Enstad and Raum in \cite{Enstad1Arxiv2022} introduces a purely topological route to the $C^\ast$-algebra canonically associated to a pattern ({\it i.e.} closed subset)  in a lcsc group. Their approach is extremely general since it employs the standard Fell topology of the space of closed subsets and it relies solely on topological means to define uniformly discrete patterns. The alternative will be to endow the group with a metric and to generalize the induced Hausdorff metric on the space of compact subsets to a metric on the space of closed subsets, as it has been successfully done in the context of the abelian group $\RM^d$ \cite{KellendonkRMP95,ForrestAMS2002,LenzTheta2003}. However, it has been already recognized that those constructions lead exactly to the Fell topology \cite{BellissardPrivate}, which can  be seen from \cite{BenedettiBook}[Prop.~E.1.3] and \cite{ForrestAMS2002}[pg.~16]. For these reasons, the formalism put forward in \cite{Enstad1Arxiv2022} feels very natural and fits well with all the details of the applications we have in mind. Therefore, we decided to adopt it here and abandon our earlier efforts on alternative approaches. As an upshot, most of the technical results needed along the way have been already supplied in \cite{Enstad1Arxiv2022}, which means we can focus on physical interpretations and examples.

\subsection{Topologizing the space of patterns} For us, a pattern in a topological space is just a closed subset of that space. As we have seen in section~\ref{Sec:Emp}, the physical systems like the one illustrated in Fig.~\ref{Fig:RL} are associated to discrete subsets of the lcsc group ${\rm Iso}(\EM^d)$. Of course, the main interest is in unbounded subsets and this is the main source of difficulties. Thus, our first task is to describe the space of closed subsets of a lcsc group. Most of what we say is reproduced from \cite{FellPAMS1962}.

\begin{definition}[\cite{FellPAMS1962}] Let $X$ be any fixed topological space and let $\Cc(X)$ be the family of all closed subsets, hence including the void set $\emptyset$. For each compact subset $K$ of $X$ and each finite family $\Ff$ of nonvoid open subsets of $X$, let $U(K;\Ff)$ be the subset of $\Cc(X)$  consisting of all $Y$ such that: (i) $Y \cap K = \emptyset$, and (ii) $Y \cap F \neq \emptyset$ for each $F \in \Ff$. The Fell topology on $\Cc(X)$ is defined by the subbasis consisting of such $U(K;\Ff)$.
\end{definition} 

\begin{proposition}[\cite{FellPAMS1962}] If $X$ is locally compact, then the space $\Cc(X)$ of patterns in $X$ is automatically a compact Hausdorff space.
\end{proposition}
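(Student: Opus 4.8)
The plan is to establish the two defining properties separately: that $\Cc(X)$ is Hausdorff and that it is compact. The Hausdorff property I would prove by hand from local compactness, while for compactness I would invoke Alexander's subbasis lemma, since the topology is handed to us precisely by a subbasis. A preliminary simplification makes the latter efficient: every subbasic set factors as
\[
U(K;\Ff) \;=\; U(K;\emptyset) \,\cap\, \bigcap_{F \in \Ff} U(\emptyset;\{F\}),
\]
so the two families $\{U(K;\emptyset) : K \text{ compact}\}$ and $\{U(\emptyset;\{F\}) : F \text{ nonvoid open}\}$ already form a subbasis. I will call these the \emph{avoidance} sets (closed sets missing $K$) and the \emph{hitting} sets (closed sets meeting $F$). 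Since Alexander's lemma applies to any subbasis, it suffices to extract a finite subcover from an arbitrary cover of $\Cc(X)$ by avoidance and hitting sets.

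For Hausdorffness, take $Y_1 \neq Y_2$ in $\Cc(X)$ and, without loss of generality, a point $p \in Y_1 \setminus Y_2$. Because $X \setminus Y_2$ is open and contains $p$, local compactness provides a compact neighborhood $K$ of $p$ with $K \subseteq X \setminus Y_2$; set $F = \mathrm{int}(K)$, a nonvoid open set containing $p$. Then $Y_1 \in U(\emptyset;\{F\})$ since $p \in Y_1 \cap F$, while $Y_2 \in U(K;\emptyset)$ since $K \cap Y_2 = \emptyset$. These two neighborhoods are disjoint: any $Z$ lying in both would satisfy $Z \cap F \neq \emptyset$ and $Z \cap K = \emptyset$, which is impossible because $F \subseteq K$. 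This separates the two points.

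For compactness, suppose we are given a cover of $\Cc(X)$ by avoidance sets $U(K;\emptyset)$, $K \in \mathcal{A}$, together with hitting sets $U(\emptyset;\{F\})$, $F \in \mathcal{B}$. The decisive step is to test this cover against the single closed set
\[
C \;=\; X \setminus \bigcup_{F \in \mathcal{B}} F.
\]
By construction $C$ meets no $F \in \mathcal{B}$, so $C$ lies in none of the hitting sets; since the family covers $\Cc(X)$, there must exist $K \in \mathcal{A}$ with $C \cap K = \emptyset$, equivalently $K \subseteq \bigcup_{F \in \mathcal{B}} F$. Now compactness of $K$ enters: finitely many $F_1, \ldots, F_n \in \mathcal{B}$ already cover $K$. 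I claim that $U(K;\emptyset)$ together with $U(\emptyset;\{F_1\}), \ldots, U(\emptyset;\{F_n\})$ is the desired finite subcover. Indeed, for an arbitrary closed $Y$, either $Y \cap K = \emptyset$, placing $Y$ in $U(K;\emptyset)$, or else $Y$ contains a point of $K$, which lies in some $F_k$, placing $Y$ in $U(\emptyset;\{F_k\})$. Alexander's lemma then yields compactness.

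The main obstacle lies entirely in the compactness half, and within it the one genuinely creative move is the choice of the probing set $C$: it is engineered so that \emph{failing} to be covered by the hitting sets forces it to be covered by an avoidance set, and that single avoidance condition, read through the compactness of the associated $K$, is exactly what collapses the given cover to a finite one. The Hausdorff argument, by contrast, is routine once local compactness is used to nest a compact neighborhood inside the open complement of the excluded set.
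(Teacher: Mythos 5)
The paper itself contains no proof of this proposition---it is quoted verbatim from Fell's 1962 paper as background---so there is nothing internal to compare against; your argument is correct and is essentially Fell's original one. Both halves are sound: the reduction of the subbasis to \emph{avoidance} sets $U(K;\emptyset)$ and \emph{hitting} sets $U(\emptyset;\{F\})$ is legitimate, the disjointness of the separating neighborhoods follows correctly from $F \subseteq K$, and in the compactness half the probe $C = X \setminus \bigcup_{F \in \mathcal{B}} F$ together with Alexander's subbasis lemma is exactly the standard device (the edge cases $\mathcal{A} = \emptyset$ or $\mathcal{B} = \emptyset$ are handled automatically by your phrasing, since $\emptyset \in \Cc(X)$ lies in no hitting set and $X \in \Cc(X)$ lies in no avoidance set with $K \neq \emptyset$). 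Two remarks. First, your Hausdorff step uses local compactness in the strong form---a compact neighborhood of $p$ inside an arbitrary prescribed open set---which is automatic for the Hausdorff spaces this paper cares about ($X$ a lcsc group) but is the hypothesis genuinely needed in Fell's non-Hausdorff setting, so you should flag that this is the sense of ``locally compact'' being used. Second, it is worth observing that your compactness argument never invokes local compactness at all, which correctly reflects the known fact that $\Cc(X)$ with the Fell topology is compact for an arbitrary topological space $X$, local compactness being needed only for the Hausdorff property.
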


If $X$ is lcsc, then $\Cc(X)$ is second-countable too and its topology can be described by convergence of sequences. The convergence of sequences for the Fell topology can be described in the following concrete terms:

\begin{proposition}[\cite{BenedettiBook}]\label{Eq:Cr1} Let $C_n, C \in \Cc(X)$ for each $n \in \NM$. Then $C_n$ converges to $C$ in the Fell topology if and only if both of the following statements hold: (i) Whenever $x \in C$ then there exist $x_n \in C_N$ such that $x_n \to x$, and (ii) Whenever $(n_k)_{k \in \NM}$ is a subsequence of $\NM$ and $x_{n_k} \in C_{n_k}$ converges to $x \in X$, then $x \in C$.
\end{proposition}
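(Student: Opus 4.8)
The plan is to prove both implications by exploiting the fact that a subbasic open set $U(K;\Ff)$ encodes two kinds of constraints on a closed set $Y$: a single \emph{avoidance} constraint $Y\cap K=\emptyset$ and finitely many \emph{hitting} constraints $Y\cap F\neq\emptyset$. Since $X$ is lcsc, the space $\Cc(X)$ is second countable, hence metrizable, so convergence is indeed detected by sequences; moreover a sequence converges to $C$ precisely when it is eventually inside every \emph{subbasic} neighbourhood of $C$, because a basic neighbourhood is a finite intersection of subbasic ones and one simply takes the largest of the finitely many thresholds. Thus the whole argument reduces to matching condition (i) with the hitting constraints and condition (ii) with the avoidance constraints.

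First I would show that Fell convergence forces (i) and (ii). For (i), fix $x\in C$ and a decreasing countable neighbourhood basis $V_1\supseteq V_2\supseteq\cdots$ of $x$ of open sets (available by first countability). Since $x\in C\cap V_k$, each $U(\emptyset;\{V_k\})$ is an open neighbourhood of $C$, so there are indices $N_1<N_2<\cdots$ with $C_n\cap V_k\neq\emptyset$ for all $n\ge N_k$; choosing $x_n\in C_n\cap V_k$ when $N_k\le n<N_{k+1}$ (and arbitrarily for the finitely many earlier $n$) yields $x_n\to x$. For (ii), suppose $x_{n_k}\in C_{n_k}$ with $x_{n_k}\to x$ but $x\notin C$. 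Using local compactness and the Hausdorff property, separate $x$ from the closed set $C$ by a compact neighbourhood $K$ of $x$ with $K\cap C=\emptyset$; then $U(K;\emptyset)$ is a neighbourhood of $C$, so eventually $C_n\cap K=\emptyset$, while $x_{n_k}\in C_{n_k}\cap K$ for large $k$ since $K$ is a neighbourhood of $x$ — a contradiction.

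Conversely, assuming (i) and (ii), I would verify that each subbasic neighbourhood of $C$ is eventually satisfied. A hitting constraint $C\cap F\neq\emptyset$ is handled by (i): pick $x\in C\cap F$ and $x_n\in C_n$ with $x_n\to x$; openness of $F$ gives $x_n\in F$ eventually, whence $C_n\cap F\neq\emptyset$. An avoidance constraint $C\cap K=\emptyset$ is handled by (ii) together with compactness: if $C_n\cap K\neq\emptyset$ along some subsequence, then compactness of $K$ lets us extract points $x_{n_k}\in C_{n_k}\cap K$ converging to some $x\in K$, and (ii) forces $x\in C$, contradicting $C\cap K=\emptyset$; hence $C_n\cap K=\emptyset$ eventually.

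The main obstacle is not any single estimate but the correct interface between the topology and the sequential picture: one must justify that testing only \emph{subbasic} neighbourhoods suffices and keep the diagonal construction in (i) honest by arranging the thresholds $N_k$ to increase. The only genuinely analytic inputs are local compactness plus Hausdorffness, used once to produce the separating compact set $K$ in (ii), and compactness of $K$, used once to extract the convergent subsequence in the converse. Minor care is also needed for the empty-set edge cases, but these are automatically controlled because the hitting constraints force $C_n$ to be nonvoid whenever $C$ is.
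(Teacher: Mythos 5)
Your proof is correct: the paper itself gives no argument for this proposition (it is quoted from the book of Benedetti--Petronio), and your reduction to subbasic neighbourhoods, the diagonal choice over a countable neighbourhood basis for (i), the separating compact neighbourhood from local compactness plus Hausdorffness for (ii), and the sequential-compactness extraction in the converse constitute exactly the standard textbook proof the citation points to. The only blemish is the empty-set edge case in (i): the hitting constraints make $C_n$ nonvoid only \emph{eventually}, so for the finitely many initial indices with $C_n=\emptyset$ no choice of $x_n$ exists and the statement must be read as defining $x_n$ for all sufficiently large $n$ --- a convention, not a gap in your argument.
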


As we already mentioned in section~\ref{Sec:Emp}, one of the tasks for us is to address the continuity of the map~\eqref{Eq:L1}. The practical criterion from Proposition~\ref{Eq:Cr1} reflects exactly what one will intuitively think of a continuous deformation of the pattern $\Ll$, namely, a continuous displacement of its points. Thus, Fell topology on the domain of the map~\eqref{Eq:L1} gives the answer to our first challenge.

\subsection{Classes of discrete patterns} From here on, we specialize the discussion to the case when $X$ is a lcsc topological group $G$. In such instances, $G$ plays both the passive role of hosting a pattern and the active role of space transformations. For the former, we will denote the elements of $G$ by $x,y, \ldots$, and for the latter we will use $g,g',\ldots$. We continue to denote  its neutral element by $e$. The right action of the group on itself can be elevated to a continuous $G$-action on $\Cc(G)$:
\begin{equation}
g \cdot C = \{x g^{-1}, \ x \in C\}, \quad g \in G, \quad C \in \Cc(G).
\end{equation}

The following it is claimed in \cite{Enstad1Arxiv2022} to be standard terminology for point sets in frame theory, though we were not able to identify the source of it. 

\begin{definition}[\cite{Enstad1Arxiv2022}] For $\Ll\in \Cc(G)$ and $S \subseteq G$, one says that $\Ll$ is
\begin{enumerate}[{\rm 1)}]
\item $S$-separated if $|\Ll \cap g \cdot S| \leq 1$ for all $g \in G$;
\item $S$-dense if $|\Ll \cap g \cdot S| \geq 1$ for all $g \in G$.
\end{enumerate}
\end{definition}

\begin{definition}[\cite{Enstad1Arxiv2022}]\label{Def:US} A subset $\Ll \subset G$ is called uniformly separated if there exists a non-empty open set $U \subseteq G$ such that $\Ll$ is $U$-separated. The set $\Ll$ is called uniformly dense if there exists a compact subset $K \subseteq G$ such that $\Ll$ is $K$-dense. If $\Ll$ is both separated and relatively dense, the it is called a Delone set.
\end{definition} 

From our discussion in section~\ref{Sec:Emp}, it is evident that, in such applications, we are dealing with uniformly separated subsets of ${\rm Iso}(\EM^d)$. For example, the resonators in Fig.~\ref{Fig:RL} cannot be overlapped and also the regime of small couplings was invoked in section~\ref{Sec:Emp}. Thus, in these applications, the origin of the seed resonator can be surrounded by an imaginary ball $B$ of the Euclidean space and these rigidly attached balls should not overlap when copies of the seed resonator are incorporated in the designs of the metamaterials. This imaginary ball then leads to the open set $U = B \times O(d)$ in ${\rm Iso}(\EM^d)$ and to a $U$-separated architecture $\Ll$. This kind of designs, however, do not exploit to the fullest what ${\rm Iso}(\EM^d)$ has to offer.  In principle, any open set $U \subset {\rm Iso}(\EM^d)$ can be used to shape the resonators and this gives us hopes there is plenty to be explored and discovered with these physical systems.

A Delone pattern is usually associated to the bulk of a material. However, a typical application of the operator theoretic methods is to the so called bulk-defect correspondence principle \cite{ProdanJPA2021} and a metamaterial with a defect is rather associated to uniformly separated patterns. Hence our focus from now on is on such patterns. The following statement gives a first characterization of these families:

\begin{proposition}[\cite{Enstad1Arxiv2022}] For any non-empty open set $U \subseteq G$, the set of $U$-separated sets is closed in $\Cc(G)$.
\end{proposition}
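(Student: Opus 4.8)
The plan is to prove closedness via the sequential characterisation of the Fell topology. Since $G$ is lcsc, the space $\Cc(G)$ is second-countable, so closedness is equivalent to sequential closedness; hence it suffices to take a sequence $(\Ll_n)_{n\in\NM}$ of $U$-separated sets with $\Ll_n \to \Ll$ in $\Cc(G)$ and to show that the limit $\Ll$ is again $U$-separated. I would argue by contradiction. Suppose $\Ll$ fails to be $U$-separated. Unwinding Definition~\ref{Def:US}, this means there is some $g \in G$ for which $|\Ll \cap g\cdot U| \geq 2$, that is, there exist two distinct points $x,y \in \Ll \cap g\cdot U$.

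The core of the argument is to transport this pair of points back into the approximating sets $\Ll_n$. Applying part~(i) of Proposition~\ref{Eq:Cr1} to $x \in \Ll$ and to $y \in \Ll$ separately, I obtain sequences $x_n, y_n \in \Ll_n$ with $x_n \to x$ and $y_n \to y$. The geometric fact I would rely on is that $g\cdot U = Ug^{-1}$ is open, being the image of the open set $U$ under the homeomorphism given by right translation by $g^{-1}$; consequently, since $x,y \in g\cdot U$, the convergences $x_n\to x$ and $y_n \to y$ force $x_n, y_n \in g\cdot U$ for all sufficiently large $n$. Moreover, as $G$ is Hausdorff and $x\neq y$, choosing disjoint neighbourhoods of $x$ and $y$ shows that $x_n \neq y_n$ for all large $n$. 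Thus, for such $n$, the set $\Ll_n \cap g\cdot U$ contains the two distinct points $x_n$ and $y_n$, so $|\Ll_n \cap g\cdot U| \geq 2$, contradicting the assumed $U$-separation of $\Ll_n$. This contradiction establishes that $\Ll$ is $U$-separated.

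The argument is short, and in fact only part~(i) of the sequential criterion is needed; condition~(ii) plays no role here. The one point that deserves care, and which I regard as the main (if modest) obstacle, is the step guaranteeing that the lifted points eventually land \emph{simultaneously} inside the same open translate $g\cdot U$ while remaining \emph{distinct}: both features hinge on the interplay between the openness of $g\cdot U$ and the Hausdorff property of $G$, together with the fact that the single group element $g$ (hence the fixed neighbourhood $g\cdot U$) serves for all large $n$. Were one to avoid sequences and work directly with the subbasic sets $U(K;\Ff)$, one would instead have to produce, for a suitable compact $K$ and open family $\Ff$, a subbasic neighbourhood of $\Ll$ meeting no $U$-separated set; the sequential route is considerably cleaner given second-countability.
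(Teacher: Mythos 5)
Your proof is correct, and there is nothing in the paper to compare it against: the proposition is quoted from Enstad--Raum \cite{Enstad1Arxiv2022} without proof, so your sequential argument stands on its own. The three ingredients you use are exactly right and correctly deployed --- second countability of $\Cc(G)$ (so sequential closedness suffices, the space being in fact compact, Hausdorff and metrizable), part (i) of the convergence criterion in Proposition~\ref{Eq:Cr1} to lift the two offending points $x \neq y$ of $\Ll \cap g\cdot U$ into the $\Ll_n$, and the openness of $g \cdot U = Ug^{-1}$ together with Hausdorffness of $G$ to guarantee the lifted points eventually lie in the same translate $g\cdot U$ while staying distinct, contradicting $U$-separation of $\Ll_n$; your observation that part (ii) of the criterion is never needed is also accurate.
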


Note that, among other things, the above statement assures us that the limit of a sequence of $U$-separated closed subsets is also $U$-separated.

\subsection{The hull and transversal of a pattern} The concepts introduced here make sense for generic patterns in a lcsc group $G$. Therefore, we will keep the presentation at that level of generality and then follow with specific examples relevant for the physical systems discussed in section~\ref{Sec:Emp}.

\begin{definition} The closure of the orbit of $\Ll \in \Cc(G)$ under the natural $G$-action is called the hull of $\Ll$:
\begin{equation}
\Omega_{\Ll} = \overline{\{g \cdot\Ll, \ g \in G\}}.
\end{equation}
\end{definition}

Since $\Cc(G)$ is compact, $\Omega_\Ll$ is compact for any $\Ll \in \Cc(G)$.  Furthermore, since the property of a closed subset of being $U$-separated is unaffected by translations, any member of $\Omega_\Ll$ is $U$-separated if $\Ll$ is $U$-separated. A fine point noted in \cite{Enstad1Arxiv2022} is that, if a pattern is not uniformly dense, then $\Omega_\Ll$ contains the empty set $\emptyset$ as an invariant point. Thus, the proper space for analysis is:

\begin{definition}[\cite{Enstad1Arxiv2022}] The punctured hull of $\Ll$ is defined as:
\begin{equation}
\Omega^\times_{\Ll} = \Omega_\Ll \setminus \emptyset.
\end{equation}
\end{definition}

$\Omega^\times_{\Ll}$ is a locally compact space. Most that will be said from now on builds on the dynamical system $(G,\Omega^\times_\Ll)$. An essential role will be undertaken by its standard Poincar\'e transversal:

\begin{definition}\label{Def:ST} The standard transversal of $(G,\Omega^\times_\Ll)$ is 
\begin{equation}
\Xi_\Ll = \{ \Ll' \in \Omega_\Ll, \ e \in \Ll'\} \subset \Omega_\Ll.
\end{equation}
\end{definition}

The standard transversal is closed in $\Omega_\Ll$ and has no intersection with $\emptyset$, hence it is actually a compact subspace of $\Omega^\times_\Ll$. The applications of the operator theoretic approach to materials science rests quite heavily on the ability to calculate $\Xi_\Ll$. In the physics literature, this space is called the phason space of the pattern. 

\begin{figure}[t]
\center
\includegraphics[width=\textwidth]{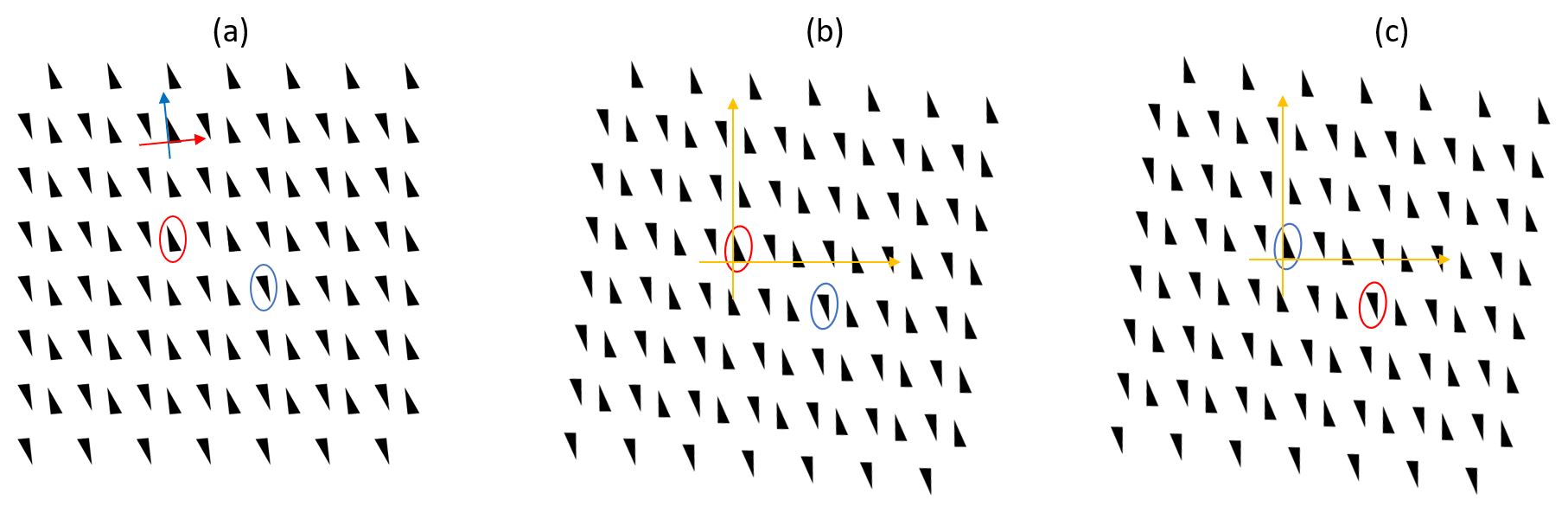}\\
  \caption{\small (a) Section of an infinite pattern generated with p2 wallpaper group. The intrinsic frame attached to the resonators is indicated by the blue/red axes. (b) A view of the pattern after it was rigidly handled until the resonator highlighted in red in panel (a) is aligned with the laboratory's coordinate system. (c) Same as (b) for the resonator highlighted in blue in panel (a). This should convince the reader that the pattern looks the same no matter which resonator we pick. As a result, the transversal of this pattern consists of a single point.
}
 \label{Fig:P2}
\end{figure}

\begin{figure}[b]
\center
\includegraphics[width=\textwidth]{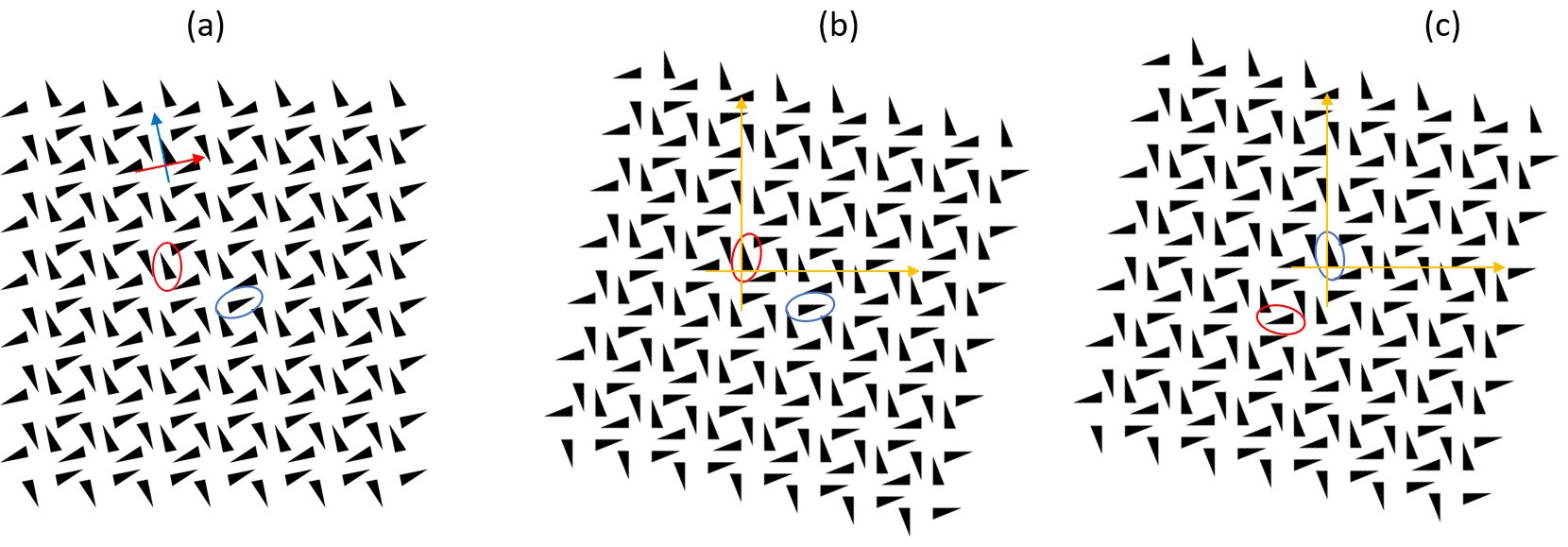}\\
  \caption{\small Same as Fig.~\ref{Fig:P2}, but for the wallpaper group p4.
}
 \label{Fig:P4}
\end{figure}

\begin{remark}\label{Re:Transversal}{\rm For the patterns described in section~\ref{Sec:Emp}, a practical way to think about the transversal is as follows. Using rigid motions, we can place a resonator of the pattern at the origin, with its intrinsic frame aligned with that of the laboratory. We take a picture of the resulted pattern with a camera located above the origin of the laboratory frame and we place that picture in $\Xi_{\Ll}$. We repeat the process until each resonator gets to sit at the origin once and then we take the topological closure of the discrete set of patterns so obtained. The criterion spelled in Proposition~\ref{Eq:Cr1} is instrumental for this last step.
}$\Diamond$
\end{remark}

\subsection{Examples}\label{Sec:Example} The success of the $C^\ast$-algebraic framework, as applied to the context discussed in section~\ref{Sec:Emp}, is very much conditioned by ones ability to map out the standard transversals. While these applied aspects will be scrutinized in our future works, here we illustrate several instances where the standard transversal can be mapped without much effort. We use these examples to highlight the differences emerging when the patterns are seen as subsets of ${\rm Iso}(\EM^d)$ or of $\RM^d$.

\begin{example}\label{Ex:P2}{\rm The pattern of resonators in Fig.~\ref{Fig:P2} was generated by acting with the space-group p2 on an initial triangle slightly displaced from the origin. This displacement is needed in order for the centers of the resonators to generate a uniformly separated point set. According to Remark~\ref{Re:Transversal}, we should move each resonator to the origin of the laboratory frame and align the two. This is done in Figs.~\ref{Fig:P2}(b,c) for two randomly picked resonators. As one can see, the resulted patterns look the same in the two situations, and evidently will look the same if we sample other resonators. As such, the standard transversal consists of a single point, the pattern that we see either in panel (b) or (c).
}$\Diamond$
\end{example}

\begin{example}\label{Ex:P4}{\rm The pattern from Fig.~\ref{Fig:P4} was generated as in Example~\ref{Ex:P2}, but using the wallpaper group p4. The conclusion is the same, namely the transversal of the pattern consists of a single point. These two findings are not singular. In fact, any pattern generated by acting with a wallpaper group on a seed shape  will have a singleton tranversal. Examples of architected materials designed with such algorithms can be found in \cite{Lux2Arxiv2022}.
}$\Diamond$
\end{example}

\begin{figure}[t]
\center
\includegraphics[width=\textwidth]{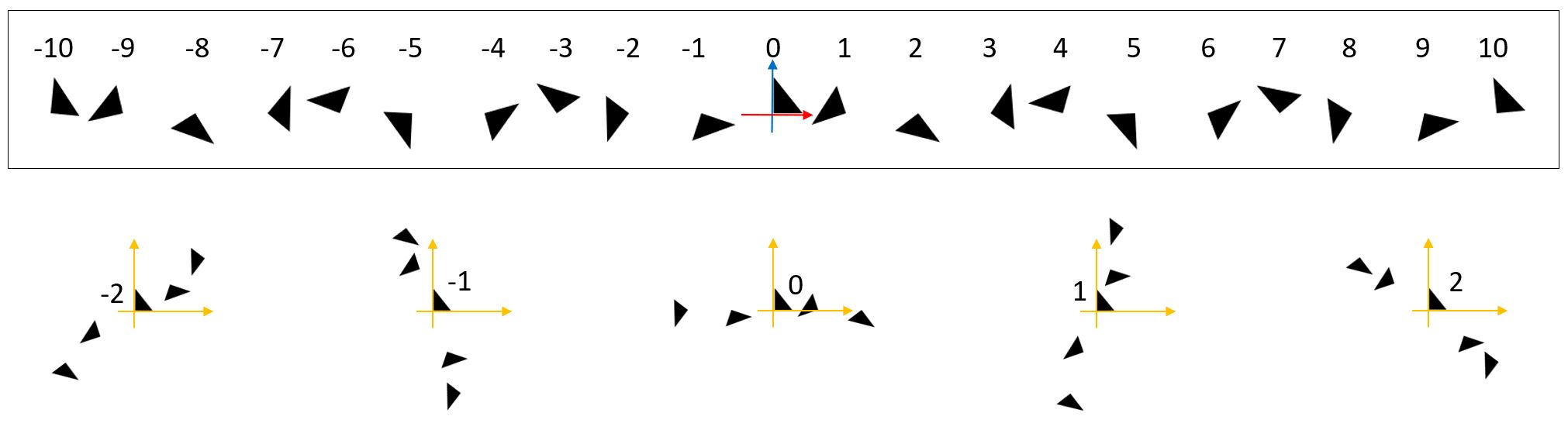}\\
  \caption{\small The pattern shown in the top row was generated by repeated translation and rotation by angle $\theta = \frac{2 \pi}{\sqrt{11}}$, starting from the seed triangle 0. The bottom row shows how the pattern looks after different resonators have been aligned with the laboratory frame.
}
 \label{Fig:1DQP}
\end{figure}

\begin{remark}\label{Re:GvsZ}{\rm As abstract patterns, the motifs from Figs.~\ref{Fig:P2} and \ref{Fig:P2} can be certainly seen as closed subsets of the group $\RM^2$ of pure translations. In this case, their standard transversals, as defined in Def.~\ref{Def:ST},  are homeomorphic to two, respectively, four copies of the seeding shape. If, instead, we follow Kellendonk's recipe for constructing a transversal and take the origin of the local frames as the puctures of the tiles, we find transversals that consists of two, respectively, four points. }$\Diamond$
\end{remark}

\begin{figure}[b]
\center
\includegraphics[width=\textwidth]{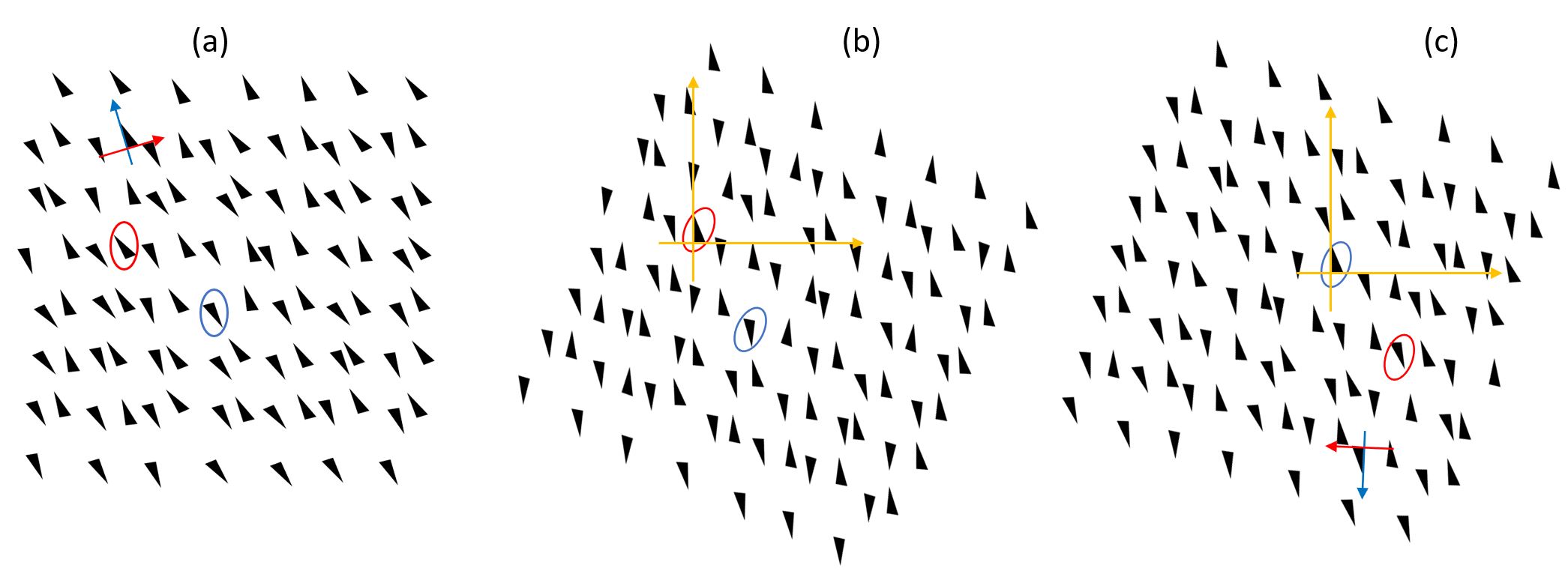}\\
  \caption{\small Same as Fig.~\ref{Fig:P2} but with random fluctuations in the configuration of the initial shape.
}
 \label{Fig:P2Disorder}
\end{figure}

\begin{example}\label{Ex:1DQP}{\rm Fig.~\ref{Fig:1DQP} displays a pattern of resonators generated by repeated application of $(t,\bm r) \in {\rm Iso}(\EM^2)$ on a seed triangle, where $t$ is a translation and $\bm r$ is a proper rotation by an angle $\theta$ that is incommensurate with $2\pi$ (see appendix~\ref{Appendix} for notation and other details). Fig.~\ref{Fig:1DQP} also shows rigid shifts of the pattern resulted from aligning different resonators with the laboratory's frame. Each of those patterns represent a point of the transversal $\Xi_{\Ll}$. As one can see, the axis of the pattern rotates and also the local environment of the aligned resonator changes as well as a result of these actions. Still, there is a bijection between the orbit of $\Ll$ generated by these alignments and the points $n \theta$ of the circle $2 \pi \RM/\ZM$. The latter densely fill the circle and it is not difficult to show (see \cite{KellendonkAHP2019} for such an exercise) that the transversal, which is the closure of the orbit, is homeomorphic with the full circle. 
}$\Diamond$
\end{example}

\begin{remark}{\rm Example~\ref{Ex:1DQP} can be viewed as a quasi 1-dimensional physical system without a defect, but note that its natural description and computation of the transversal still involves the Euclidean group ${\rm Iso}(\EM^2)$. As a point set of  ${\rm Iso}(\EM^2)$, this pattern is uniformly separated but not a Delone set. This also shows why the class of uniformly separated patterns is actually the natural one to consider for this kind of applications.
}$\Diamond$
\end{remark}

\begin{remark}{\rm We can use Example~\ref{Ex:1DQP} to showcase some practical advantages brought in by the leap to the full Euclidean group. The quasi-periodic patterns of point resonators are quite popular with the materials scientists. In the existing applications, some already mentioned in section~\ref{Sec:Introduction}, the patterns were generated by pure translations of the seed resonators, following certain algorithms (see {\it e.g.} \cite{ApigoPRM2018} or \cite{ChenPRApplied2021}). In such cases, the physical systems are treated as point sets in $\RM^d$ and the standard transversal can be shaped as circles or tori. Many dynamical effects can be, theoretically, brought to life by driving the phason along topologically nontrivial paths inside such transversals. This, unfortunately, is a very difficult task in practice, especially if only translations are allowed. For the pattern from Example~\ref{Ex:1DQP}, however, a driving of the phason can be accomplished by synchronously rotating the resonator while holding their centers fixed, which is a much simpler procedure. This is not an isolated example since upgrading from the group of pure rotations to the full group of Euclidean isometries brings in more opportunities that can be exploited in specific applications (see \cite{MartinezArxiv2023} for specific examples).
}$\Diamond$
\end{remark}

\begin{remark}{\rm It will be difficult here to apply Kellendonk's prescription, which requires tiling of the space.}$\Diamond$
\end{remark}

\begin{example}\label{Ex:P2Disorder}{\rm The pattern in Fig.~\ref{Fig:P2Disorder} was generated using again the discrete subgroup p2, but each isometry from the p2 group was applied on a different initial shape. These initial shapes were generated by applying $(t,\bm r) \in {\rm Iso}(\EM^2)$ on a triangle aligned with the laboratory frame, where $(t,\bm r)$ were sampled randomly and uniformly from the set $(t_0,\bm r_0) \cdot B_\epsilon \times [-\epsilon,\epsilon]$, where $B_\epsilon$ is the $\epsilon$-ball of $\EM^2$ centered at the origin. Here, we used the parametrization~\eqref{Eq:E2Topo} of ${\rm Iso}(\EM^2)$. These random fluctuations can be a result of inherent experimental errors. In his case, the transversal is the product space $\left( B_\epsilon \times [-\epsilon,\epsilon] \right)^{\times \rm p2}$ equipped with the product topology. This space comes with a continuous action of p2.
}$\Diamond$
\end{example}

\begin{example}{\rm Additional examples can be found in \cite{BeckusArxiv2021}.
}$\Diamond$
\end{example}

\section{Dynamics by groupoid methods}\label{Sec:DGM}

This section describes the groupoid canonically associated to a pattern. We pay equal attention to the algebraic structure and the topology of this groupoid. A Haar system is generally needed to build a $C^\ast$-algebra over such groupoid. An important aspect of the present context is that the groupoids associated to uniformly separated patterns are etal\'e and such groupoids come with an intrinsic Haar system, hence with an intrinsic $C^\ast$-algebra. The main statement of the section is that this $C^\ast$-algebra generates the dynamical matrices of the resonant physical systems discussed in section~\ref{Sec:Emp}, through its standard left regular representation.

\subsection{Topological groupoid associated to a pattern} To keep the exposition as streamlined as possible, we assume a readership familiar with basic notions related to groupoids. Valuable sources of information on the subject are \cite{RenaultBook,SimsSzaboWilliamsBook2020,WilliamsBook2}.

Any  topological dynamical system, hence $\left({\rm Iso}(\EM^d), \Omega^\times_{\Ll}\right)$ too, has a canonically associated transformation groupoid. A more general yet simpler structure is that of generalized equivalence and its associated topological groupoid:

\begin{definition}[\cite{WilliamsBook2},~p.~5]\label{Def:GGenEquiv} Let $G$ be a group and $X$ a set. Then $X \times G \times X$ can be given the structure of a groupoid by adopting the inversion function 
\begin{equation}
(x,g,y)^{-1} = (y,g^{-1},x),
\end{equation}
the range and source maps
\begin{equation}
\mathfrak{r}(x,g,y)=(x,e,x),\quad \mathfrak{s}(x,g,y)=(y,e,y),
\end{equation}
and by declaring that two triples $(x,g,y)$ and $(w,h,z)$ are composible if $y = w$, in which case the product is
\begin{equation}
(x,g,y)\cdot (y,h,z) : = (x,gh,z).
\end{equation} 
If $X$ is a topological space and $G$ is a topological group, then $X \times G \times X$ becomes a topological groupoid if endowed with the product topology.
\end{definition}

Using this simple groupoid structure, we can effortlessly  describe both the algebraic structure and topology of a transformation groupoid:

\begin{definition}[\cite{WilliamsBook2}~p.6] Let $G$ be lcsc and $X$ be a lcsc topological $G$-space. Then the transformation groupoid of $(X,G)$ is the lcsc topological sub-groupoid of the groupoid introduced in Definition~\ref{Def:GGenEquiv} consisting of the triples $(x,g,y)$ with $x=g\cdot y$. The topology is that induced from $X \times G \times X$.
\end{definition}

From now on, we will fix a pattern $\Ll_0 \in \Cc(G)$ in a lcsc group $G$. For the start, $\Ll_0$ can be any closed subset. When applied to $\left(G,\Omega^\times_{\Ll_0}\right)$, the above definition leads to the lcsc groupoid $\widetilde \Gg_{\Ll_0}$ with source $\tilde{\mathfrak s}$ and range $\tilde{\mathfrak r}$ maps
\begin{equation}
\tilde{\mathfrak s}\left(g \cdot \Ll,g, \Ll \right) = (\Ll,e,\Ll),  \quad \tilde{\mathfrak r}\left(g \cdot \Ll,g, \Ll \right) = \left(g \cdot \Ll, e , g \cdot \Ll \right).
\end{equation}
Its space of units $\widetilde \Gg_{\Ll_0}^{0} : = \tilde{\mathfrak s}(\widetilde \Gg_{\Ll_0}) = \tilde{\mathfrak r}(\widetilde \Gg_{\Ll_0})$ coincides with the punctured hull $\Omega^\times_{\Ll_0}$.

\begin{definition} The canonical topological groupoid $\Gg_{\Ll_0}$ associated to the closed pattern $\Ll_0 \in \Cc(G)$ is the restriction of $\widetilde \Gg_{\Ll_0}$ to the transversal $\Xi_{\Ll_0} \subset \widetilde \Gg_{\Ll_0}^{0}$ of Definition \ref{Def:ST}:
\begin{equation}\label{Eq:G0}
\Gg_{\Ll_0} : = \tilde {\mathfrak s}^{-1}(\Xi_{\Ll_0}) \cap \tilde{\mathfrak r}^{-1}(\Xi_{\Ll_0}).
\end{equation}
\end{definition}

Its space of units $\Gg_{\Ll}^{0}$ coincides with $\Xi_{\Ll_0}$, hence it is a compact space. Among other things, this assures us that its groupoid $C^\ast$-algebra has a unit. The canonical groupoid of a pattern can be characterized more concretely in a form that brings it more closely to the ones seen in the physics applications: 

\begin{proposition} The topological groupoid canonically associated to a pattern $\Ll_0 \in \Cc(G)$ consists of
\begin{enumerate}[\ \rm 1.]
\item The set of tuples
\begin{equation}
\Gg_{\Ll_0} = \left\{ (g, \Ll), \ \Ll \in \Xi_{\Ll_0}, \ g \in \Ll \right\} \subset G \times \Xi_{\Ll_0},
\end{equation}
equipped with the inversion map
\begin{equation}
(g,\Ll)^{-1} = (g^{-1},g \cdot \Ll) = g \cdot (e, \Ll)
\end{equation}
and lcsc topology inherited from $G \times \Xi_{\Ll_0}$.
\item The subset of $\Gg_{\Ll_0} \times \Gg_{\Ll_0}$ of composable elements
\begin{equation}
 \Gg_{\Ll_0}^{2} =\left\{\left((g',\Ll'),(g,\Ll)\right) \in \Gg_{\Ll_0} \times \Gg_{\Ll_0}, \  \Ll'=g\cdot \Ll \right\}
 \end{equation}
equipped with the composition 
\begin{equation}
(g',\Ll') \cdot (g,\Ll) = (g' g, \Ll).
\end{equation}
\end{enumerate} 
The source and range maps of $\Gg_{\Ll_0}$ are
\begin{equation}\label{Eq:RS}
\mathfrak s(g,\Ll) = (e,\Ll), \quad  \mathfrak r(g,\Ll) =  \left(e,g \cdot \Ll \right).
\end{equation}
\end{proposition}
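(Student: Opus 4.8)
The plan is to prove the statement purely by unwinding the definition of $\Gg_{\Ll_0}$ as the restriction $\tilde{\mathfrak s}^{-1}(\Xi_{\Ll_0}) \cap \tilde{\mathfrak r}^{-1}(\Xi_{\Ll_0})$ of the transformation groupoid $\widetilde\Gg_{\Ll_0}$, and by transporting all the groupoid data along the coordinate map that forgets the redundant range entry. Concretely, an element of $\widetilde\Gg_{\Ll_0}$ is a triple $(g\cdot\Ll, g, \Ll)$ whose first entry is completely determined by $(g,\Ll)$ through the continuous $G$-action on $\Cc(G)$; hence the assignment $(g\cdot\Ll,g,\Ll)\mapsto (g,\Ll)$ is a bijection of $\widetilde\Gg_{\Ll_0}$ onto a subset of $G\times\Omega^\times_{\Ll_0}$, and the whole proposition amounts to reading off what the source, range, inversion and product become under this bijection.

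First I would pin down the underlying set. The condition $\tilde{\mathfrak s}(g\cdot\Ll,g,\Ll)=(\Ll,e,\Ll)\in\Xi_{\Ll_0}$ is exactly $\Ll\in\Xi_{\Ll_0}$, i.e. $e\in\Ll$, while $\tilde{\mathfrak r}(g\cdot\Ll,g,\Ll)=(g\cdot\Ll,e,g\cdot\Ll)\in\Xi_{\Ll_0}$ is exactly $g\cdot\Ll\in\Xi_{\Ll_0}$. Since $\Omega_{\Ll_0}$ is $G$-invariant, the latter reduces to $e\in g\cdot\Ll$, and here the explicit form of the action is decisive: from $g\cdot\Ll=\{x g^{-1}:\ x\in\Ll\}$ one sees that $e\in g\cdot\Ll$ holds if and only if $xg^{-1}=e$ for some $x\in\Ll$, i.e. if and only if $g\in\Ll$. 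This produces the claimed description $\Gg_{\Ll_0}=\{(g,\Ll):\ \Ll\in\Xi_{\Ll_0},\ g\in\Ll\}$.

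Next I would transport the algebraic structure by direct substitution into Definition~\ref{Def:GGenEquiv}. Inserting the triple form into $(x,g,y)^{-1}=(y,g^{-1},x)$ gives $(g\cdot\Ll,g,\Ll)^{-1}=(\Ll,g^{-1},g\cdot\Ll)$, whose forgetful image is $(g^{-1},g\cdot\Ll)$; I would verify this pair again lies in $\Gg_{\Ll_0}$ (indeed $g\cdot\Ll\in\Xi_{\Ll_0}$, and $g^{-1}=e\,g^{-1}\in g\cdot\Ll$ because $e\in\Ll$), and record the reformulation $(g,\Ll)^{-1}=g\cdot(e,\Ll)$ once the natural action $g\cdot(h,\Ll)=(hg^{-1},g\cdot\Ll)$ is named. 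Composability of two triples requires the source of the first to equal the range of the second, which translates to $\Ll'=g\cdot\Ll$; applying $(x,g',y)\cdot(y,h,z)=(x,g'h,z)$ and forgetting the range entry yields $(g',\Ll')\cdot(g,\Ll)=(g'g,\Ll)$, and one checks consistency using $g'\cdot(g\cdot\Ll)=(g'g)\cdot\Ll$. The formulas $\mathfrak s(g,\Ll)=(e,\Ll)$ and $\mathfrak r(g,\Ll)=(e,g\cdot\Ll)$ are then simply the forgetful images of $\tilde{\mathfrak s}$ and $\tilde{\mathfrak r}$, with $\Xi_{\Ll_0}$ identified with the units $\{(e,\Ll):\ \Ll\in\Xi_{\Ll_0}\}$.

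The only step that is not pure algebra, and the one I would treat most carefully, is matching the topologies: I must show that forgetting the range coordinate is a homeomorphism from $\Gg_{\Ll_0}$, with the topology induced from $\Omega_{\Ll_0}\times G\times\Omega_{\Ll_0}$, onto its image in $G\times\Xi_{\Ll_0}$. One direction is the restriction of the continuous coordinate projection $\Omega_{\Ll_0}\times G\times\Omega_{\Ll_0}\to G\times\Omega_{\Ll_0}$; for the inverse, continuity follows because the map $(g,\Ll)\mapsto(g\cdot\Ll,g,\Ll)$ is continuous, its only nontrivial component being $(g,\Ll)\mapsto g\cdot\Ll$, which is continuous by the continuity of the $G$-action on $\Cc(G)$ recorded earlier. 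Hence the bijection is a homeomorphism and $\Gg_{\Ll_0}$ carries the asserted lcsc topology inherited from $G\times\Xi_{\Ll_0}$. I expect this topological bookkeeping, rather than any of the algebraic identities, to be the main (if modest) obstacle, since it is the one point where one genuinely uses a result about $\Cc(G)$ beyond mechanical substitution.
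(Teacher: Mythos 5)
Your proposal is correct and follows essentially the same route as the paper: characterize the intersection $\tilde{\mathfrak s}^{-1}(\Xi_{\Ll_0}) \cap \tilde{\mathfrak r}^{-1}(\Xi_{\Ll_0})$ by observing that $\Ll, g\cdot\Ll \in \Xi_{\Ll_0}$ forces $g \in \Ll$, then drop the redundant first entry of the triples and transport the structure maps. The only difference is one of detail: where the paper simply says the first coordinate is redundant, you explicitly verify that the forgetful map is a homeomorphism onto its image in $G \times \Xi_{\Ll_0}$ using continuity of the $G$-action on $\Cc(G)$, which is a welcome (and correct) elaboration rather than a new approach.
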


\begin{proof} Since
\begin{equation}
\tilde{\mathfrak s}^{-1}(\Ll,e,\Ll) = \left\{(g \cdot \Ll,g, \Ll ), \ g \in G \right\}
\end{equation}
and
\begin{equation}
 \quad \tilde{\mathfrak r}^{-1}(\Ll,e,\Ll) = \left\{ (\Ll,g^{-1},g \cdot \Ll), g \in G \right\},
\end{equation}
$(g \cdot \Ll,g, \Ll )$ belongs to the intersection~\eqref{Eq:G0} if and only if $\Ll$ and $g \cdot \Ll$ both belong to $\Xi_{\Ll_0}$. This automatically constrains $g$ to be on the lattice $\Ll$ and, vice versa, if $g \in \Ll$, then $(g \cdot \Ll,g, \Ll )$ belongs to the intersection~\eqref{Eq:G0}. Thus $\Gg_{\Ll_0}$ can be  presented as the sub-groupoid of $\widetilde \Gg_{\Ll_0}$ consisting of the triples
\begin{equation}
(g \cdot \Ll,g, \Ll ) \in \widetilde \Gg_{\Ll_0}, \quad \Ll \in \Xi_{\Ll_0} \subset \Omega^\times_{\Ll_0}, \quad g \in \Ll,
\end{equation}
endowed with the topology inherited from $\widetilde G_{\Ll_0}$. Lastly, we can drop the redundant first entry of the triples.
\end{proof}

\begin{example}\label{Ex:RG}{\rm In the case when $\Xi_{\Ll_0}$ is generated with a wallpaper group, as in Examples~\ref{Ex:P2} and \ref{Ex:P4}, the transversal consists of single point and $\Gg_{\Ll_0}$ reduces to the wallpaper group itself.
}$\Diamond$
\end{example}

\begin{example}{\rm  For the setting from Example~\ref{Ex:P2Disorder}, $\Gg_{\Ll_0}$ is isomorphic with the transformation groupoid associated to $\left({\rm p2},\left( B_\epsilon \times [-\epsilon,\epsilon] \right)^{\times \rm p2}\right)$.
}$\Diamond$
\end{example}

In \cite{MuhlyJOT1987}, the authors introduced an  equivalence relation on groupoids that ensures that their associated $C^\ast$-algebras are Morita equivalent. In this context, they introduced the notion of abstract transversal:

\begin{definition}[\cite{MuhlyJOT1987}] Let $\Gg$ be a lcsc groupoid with source and range maps $\mathfrak s$ and $\mathfrak r$, respectively. An abstract transversal for $\Gg$ is a closed subset $\Xi$ of its space of units $\Gg^0$ such that $\Xi \cap \mathfrak r\left(\mathfrak s^{-1}(x)\right) \neq \emptyset$ for any $x \in \Gg^0$ and $\mathfrak s$ and $\mathfrak r$ are both open maps when restricted to $\mathfrak r^{-1}(\Xi)$. 
\end{definition}

\begin{remark}{\rm The notion of group action on topological spaces can be generalized to actions of groupoids \cite{MuhlyJOT1987} (see also \cite{WilliamsBook2}[Def.~2.1]). For the transformation groupoid associated to a dynamical system $(G,\Omega)$, the groupoid action on its space of units is determined by the action of $G$ on $\Omega$. Note that $\mathfrak r\left(\mathfrak s^{-1}(x)\right)$ appearing in the above definition is just the orbit of $x$ under the $G$-action. Thus, an abstract transversal intersects every $G$-orbit. The other conditions ensure that the notion of transversal intersection is compatible with the topological groupoid structure. Thus, $\Xi$ is the analog of a Poincar\'e section.
}$\Diamond$
\end{remark}

\begin{proposition}[\cite{MuhlyJOT1987}] If $\Xi$ is an abstract transversal for $\Gg$, then $\Gg$ and $\Gg |_\Xi : = \mathfrak s^{-1}(\Xi) \cap \mathfrak r^{-1}(\Xi)$ are equivalent groupoids. In particular, their respective groupoid $C^\ast$-algebras are Morita equivalent.
\end{proposition}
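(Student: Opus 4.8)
The plan is to prove the geometric assertion — that $\Gg$ and $\Gg|_\Xi$ are equivalent groupoids — by exhibiting an explicit linking space, and then to read off the Morita equivalence of the $C^\ast$-algebras from the equivalence theorem of \cite{MuhlyJOT1987}. Writing $\Hh := \Gg|_\Xi = \mathfrak s^{-1}(\Xi) \cap \mathfrak r^{-1}(\Xi)$, the natural candidate for the linking space is
\begin{equation}
Z := \mathfrak s^{-1}(\Xi) = \{\gamma \in \Gg : \mathfrak s(\gamma) \in \Xi\}.
\end{equation}
I would equip $Z$ with a left $\Gg$-action and a right $\Hh$-action, both given by groupoid multiplication: $\Gg$ acts by $\gamma \cdot z = \gamma z$ whenever $\mathfrak s(\gamma) = \mathfrak r(z)$ — the product again lies in $Z$ since its source is that of $z$ — with moment map $\mathfrak r : Z \to \Gg^{0}$; while $\Hh$ acts by $z \cdot h = zh$ whenever $\mathfrak s(z) = \mathfrak r(h)$, with moment map $\mathfrak s : Z \to \Xi = \Hh^{0}$. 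Associativity of the groupoid product gives at once that the two actions commute.

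The first block of verifications is essentially formal. Freeness of both actions follows from cancellation in $\Gg$: if $\gamma z = z$ then $\gamma = \mathfrak r(z)$ is a unit, and symmetrically on the right. The moment map $\mathfrak s : Z \to \Xi$ is surjective because every unit $u \in \Xi$ already lies in $Z$ (as $\mathfrak s(u) = u \in \Xi$), and it is constant on $\Gg$-orbits, so it descends to $\Gg \backslash Z$. Dually, the transversal hypothesis $\Xi \cap \mathfrak r(\mathfrak s^{-1}(x)) \neq \emptyset$ for every $x \in \Gg^{0}$ says exactly that each $\Gg$-orbit meets $\Xi$; reading this off the inverse arrow (if $\mathfrak s(\gamma)=x$ and $\mathfrak r(\gamma)\in\Xi$, then $\gamma^{-1}\in Z$ with $\mathfrak r(\gamma^{-1})=x$) shows $\mathfrak r : Z \to \Gg^{0}$ is surjective, and it is constant on $\Hh$-orbits, hence descends to $Z/\Hh$. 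What remains for the geometric part is to promote the continuous bijections $\Gg \backslash Z \to \Xi$ and $Z/\Hh \to \Gg^{0}$ to homeomorphisms and to establish properness of the two actions.

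I expect properness and the openness of the orbit maps to be the main obstacle, and this is precisely where the two openness clauses in the definition of an abstract transversal are consumed. Since the inversion map $\gamma \mapsto \gamma^{-1}$ is a homeomorphism carrying $Z = \mathfrak s^{-1}(\Xi)$ onto $\mathfrak r^{-1}(\Xi)$, the hypothesis that $\mathfrak s$ and $\mathfrak r$ restrict to open maps on $\mathfrak r^{-1}(\Xi)$ translates into openness of the two moment maps; openness of the quotient maps $Z \to \Gg \backslash Z$ and $Z \to Z/\Hh$ follows, which is what turns the continuous bijections above into homeomorphisms. Properness of the $\Hh$-action amounts to showing that $(z,h) \mapsto (z, zh)$ is a proper map from the fibred product $Z \times_{\Xi} \Hh$ into $Z \times Z$; I would reduce this to closedness of the orbit equivalence relation together with the correct local structure, again feeding on the openness of the restricted source and range maps. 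Once freeness, properness, and the two homeomorphisms are secured, $Z$ is by definition a $(\Gg,\Hh)$-equivalence, so $\Gg$ and $\Gg|_\Xi$ are equivalent groupoids.

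The Morita equivalence of the groupoid $C^\ast$-algebras is then not argued by hand but deduced from the equivalence theorem of \cite{MuhlyJOT1987}: any $(\Gg,\Hh)$-equivalence $Z$, together with Haar systems on $\Gg$ and $\Hh$, makes $C_c(Z)$ into a pre-imprimitivity bimodule whose completion implements a Morita equivalence between $C^\ast(\Gg)$ and $C^\ast(\Hh)$, and likewise for the reduced algebras. In the case of interest both groupoids are \'etale, so the Haar systems are the canonical counting-measure systems and the measure-theoretic hypotheses of the equivalence theorem are automatic, the quasi-invariant measures and modular corrections of the general statement collapsing. This final invocation carries the genuine analytic content, but since it is supplied verbatim by \cite{MuhlyJOT1987}, the remaining work is entirely the geometric construction of $Z$ sketched above.
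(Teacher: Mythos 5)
The paper gives no proof of this proposition at all---it is imported wholesale from \cite{MuhlyJOT1987}---and your reconstruction is exactly the argument that citation points to (Example~2.7 plus the equivalence theorem, Theorem~2.8, of that paper): the linking space $Z=\mathfrak s^{-1}(\Xi)$ with both actions given by multiplication, the transversality condition read through inversion to get surjectivity of $\mathfrak r|_Z$, and the two openness clauses, transported by the homeomorphism $Z\to\mathfrak r^{-1}(\Xi)$, $z\mapsto z^{-1}$, to make the continuous bijections $\Gg\backslash Z\to\Xi$ and $Z/\Hh\to\Gg^{0}$ into homeomorphisms. So the geometric construction is correct and is the intended proof.

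Two corrections are in order. First, properness is not the obstacle you anticipate, and no appeal to ``closedness of the orbit equivalence relation together with the correct local structure'' is needed: since $\Gg$ is Hausdorff, the map $(\gamma,z)\mapsto(\gamma z,z)$ from the set of composable pairs $\{(\gamma,z)\in\Gg\times Z:\ \mathfrak s(\gamma)=\mathfrak r(z)\}$ is a homeomorphism onto the closed subset $\{(w,z)\in Z\times Z:\ \mathfrak s(w)=\mathfrak s(z)\}$, with continuous inverse $(w,z)\mapsto(wz^{-1},z)$, and a homeomorphism onto a closed set is proper; symmetrically, $(z,h)\mapsto(z,zh)$ has closed image $\{(z,w)\in Z\times Z:\ \mathfrak r(z)=\mathfrak r(w)\}$, since for such a pair $z^{-1}w$ automatically lies in $\Hh$ (both its source $\mathfrak s(w)$ and range $\mathfrak s(z)$ lie in $\Xi$). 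Second, your closing claim that ``both groupoids are \'etale'' is false in the application this paper makes of the proposition: it is invoked to relate $\widetilde\Gg_{\Ll_0}$ and $\Gg_{\Ll_0}$, and $\widetilde\Gg_{\Ll_0}$, whose source fibers are copies of the lcsc group $G={\rm Iso}(\EM^d)$, is not \'etale---only the restriction $\Gg_{\Ll_0}$ is. The Haar-system hypothesis of the equivalence theorem is discharged for $\widetilde\Gg_{\Ll_0}$ by the Haar measure of $G$, as for any transformation groupoid, not by counting measures. Relatedly, \cite{MuhlyJOT1987} proves the Morita statement for the \emph{full} $C^\ast$-algebras; the reduced version you also assert is true, but it is a later theorem (due to Sims and Williams) and is not ``supplied verbatim'' by the cited paper. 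Neither point breaks your construction of $Z$, which stands as written.
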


In our context, the following statement assures us that $\widetilde \Gg_{\Ll_0}$ and $\Gg_{\Ll_0}$ are equivalent groupoids.

\begin{proposition}[\cite{Enstad1Arxiv2022}] Let $\Ll_0$ be a uniformly separated pattern in $G$. Then the standard transversal $\Xi_{\Ll_0}$ from Definition~\ref{Def:ST} is an abstract transversal for $\widetilde \Gg_{\Ll_0}$.
\end{proposition}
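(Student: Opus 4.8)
The plan is to verify directly the three requirements in the definition of an abstract transversal for $\widetilde \Gg_{\Ll_0}$, using throughout the reduced description of arrows as tuples $(g,\Ll)$ with $\Ll\in\Omega^\times_{\Ll_0}$ and $g\in G$, for which $\tilde{\mathfrak{s}}(g,\Ll)=\Ll$ and $\tilde{\mathfrak{r}}(g,\Ll)=g\cdot\Ll$. The first step is to identify the relevant domain: since $e\in g\cdot\Ll$ if and only if $g\in\Ll$, one has
\begin{equation}
R:=\tilde{\mathfrak{r}}^{-1}(\Xi_{\Ll_0})=\{(g,\Ll):\ \Ll\in\Omega^\times_{\Ll_0},\ g\in\Ll\}\subseteq G\times\Omega^\times_{\Ll_0},
\end{equation}
carrying the subspace topology, and all the remaining work concerns the restrictions $\tilde{\mathfrak{s}}|_R$ and $\tilde{\mathfrak{r}}|_R$.

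Next I would dispose of the two cheap conditions. That $\Xi_{\Ll_0}$ is closed in $\Omega^\times_{\Ll_0}$ follows from the Fell topology: if $e\notin\Ll$, local compactness of $G$ supplies a compact neighbourhood $K$ of $e$ disjoint from the closed set $\Ll$, and the subbasic set $U(K;\emptyset)$ is then an open neighbourhood of $\Ll$ missing $\Xi_{\Ll_0}$. For the orbit condition, given a unit $\Ll\in\Omega^\times_{\Ll_0}$, which is nonempty precisely because we punctured the hull, pick any $x\in\Ll$; then $x\cdot\Ll=\Ll x^{-1}$ contains $e$, so it lies both in $\Xi_{\Ll_0}$ and in the orbit $\tilde{\mathfrak{r}}(\tilde{\mathfrak{s}}^{-1}(\Ll))$. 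Hence $\Xi_{\Ll_0}$ meets every orbit.

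The openness of the range map is then best handled by a change of variables. I claim the map $\Phi\colon G\times\Xi_{\Ll_0}\to R$, $\Phi(g,\Ll')=(g,g^{-1}\cdot\Ll')$, is a homeomorphism: it is continuous because the $G$-action on $\Omega^\times_{\Ll_0}$ is continuous, its image does lie in $R$ since $g\in g^{-1}\cdot\Ll'=\Ll' g$ whenever $e\in\Ll'$, and its inverse $(g,\Ll)\mapsto(g,g\cdot\Ll)$ is continuous for the same reason. Under $\Phi$ one computes $\tilde{\mathfrak{r}}\circ\Phi=\mathrm{pr}_{\Xi_{\Ll_0}}$, the coordinate projection, so that $\tilde{\mathfrak{r}}|_R=\mathrm{pr}_{\Xi_{\Ll_0}}\circ\Phi^{-1}$ is a composition of a homeomorphism with an open projection, hence open.

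The remaining and, in my view, crucial point is the openness of $\tilde{\mathfrak{s}}|_R\colon R\to\Omega^\times_{\Ll_0}$, where the combinatorics of the Fell topology must be used explicitly. It suffices to check that basic open sets map to open sets, and for open $O\subseteq G$ and $\Oo\subseteq\Omega^\times_{\Ll_0}$ one finds
\begin{equation}
\tilde{\mathfrak{s}}\big((O\times\Oo)\cap R\big)=\{\Ll\in\Oo:\ \Ll\cap O\neq\emptyset\}=\Oo\cap U(\emptyset;\{O\}),
\end{equation}
which is open because $U(\emptyset;\{O\})=\{Y:\ Y\cap O\neq\emptyset\}$ is a subbasic Fell-open set. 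Since an arbitrary open $W\subseteq R$ is a union of such pieces and $\tilde{\mathfrak{s}}$ commutes with unions, $\tilde{\mathfrak{s}}(W)$ is open, completing the verification. Notably, the uniform separation hypothesis does not seem to enter any one of these three checks directly; rather it is essential upstream, guaranteeing via the preceding proposition that the fibers of $\tilde{\mathfrak{s}}|_R$ are discrete and that $\Gg_{\Ll_0}=\widetilde \Gg_{\Ll_0}|_{\Xi_{\Ll_0}}$ is \'etale, so that $\Xi_{\Ll_0}$ is a genuine Poincar\'e transversal. The two places where I would proceed with the most care are confirming that $\Phi$ is a homeomorphism onto $R$ with its subspace topology and that the occupancy sets $U(\emptyset;\{O\})$ are open, since these deceptively simple facts carry the entire weight of the argument.
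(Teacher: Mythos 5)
Your verification is correct, and it is worth noting at the outset that the paper contains no proof of this proposition to compare against: the statement is imported verbatim from \cite{Enstad1Arxiv2022}, so your argument supplies details the paper omits. All three conditions of the Muhly--Renault--Williams definition are checked soundly. The identification $R=\tilde{\mathfrak r}^{-1}(\Xi_{\Ll_0})=\{(g,\Ll):g\in\Ll\}$ is right, since $e\in g\cdot\Ll=\Ll g^{-1}$ iff $g\in\Ll$; the closedness argument via the subbasic set $U(K;\emptyset)$ with $K$ a compact neighbourhood of $e$ disjoint from $\Ll$ works (the empty family $\Ff$ is permitted by the Fell definition, as is $K=\emptyset$ in your later use of $U(\emptyset;\{O\})$); and the trivialization $\Phi(g,\Ll')=(g,g^{-1}\cdot\Ll')$, which exhibits $R\simeq G\times\Xi_{\Ll_0}$ and turns $\tilde{\mathfrak r}|_R$ into the open coordinate projection, is a clean device consistent with how the openness is handled in \cite{Enstad1Arxiv2022}. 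Your key computation $\tilde{\mathfrak s}\bigl((O\times\Oo)\cap R\bigr)=\Oo\cap U(\emptyset;\{O\})$ is where the Fell topology genuinely carries the weight, and it is exactly right, with the passage to arbitrary open $W\subseteq R$ legitimate because images commute with unions and the sets $(O\times\Oo)\cap R$ form a basis of the subspace topology. Two remarks. First, your implicit codomain conventions --- $\tilde{\mathfrak r}|_R$ open onto $\Xi_{\Ll_0}$ with its subspace topology, $\tilde{\mathfrak s}|_R$ open into the full unit space $\Omega^\times_{\Ll_0}$ --- are the correct reading of the definition in \cite{MuhlyJOT1987}, though since the paper's Definition is silent on this point you should state it explicitly: openness onto $\Xi_{\Ll_0}$ does not imply openness into $\Omega^\times_{\Ll_0}$ unless $\Xi_{\Ll_0}$ is open, which it is not in general. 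Second, your observation that uniform separation never enters the three checks is accurate and worth keeping: the abstract-transversal conditions hold for any non-empty closed pattern, and the hypothesis matters only downstream, where \'etaleness of $\Gg_{\Ll_0}$ supplies the Haar system (counting measures) that the restricted groupoid needs for the Muhly--Renault--Williams equivalence to produce the Morita equivalence of $C^\ast$-algebras.
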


\begin{remark}{\rm Both groupoids $\widetilde \Gg_{\Ll_0}$ and $\Gg_{\Ll_0}$ are equally useful in physical applications \cite{Bellissard1986,KellendonkRMP95}. For example, most of the time, the empirical input, such as probability measures on the spaces of units, is readily available for $\widetilde \Gg_{\Ll_0}$, but, as explained below, the analysis is simpler on $\Gg_{\Ll_0}$.}$\Diamond$
\end{remark}

\begin{definition}[\cite{SimsSzaboWilliamsBook2020},~Def.~8.4.1] A topological groupoid $\Gg$ is called \'etale if the range map is a local homeomorphism.
\end{definition}

\begin{remark}{\rm The \'etale groupoids can be thought as generalizations of discrete groups. Their fibers  $\mathfrak r^{-1}(x)$ and $\mathfrak s^{-1}(x)$ are discrete topological spaces for any $x \in \Gg^0$ and any \'etale groupoid admits a canonical Haar system supplied by the system of counting measures on each fiber \cite[Def. 2.6, Lemma 2.7]{RenaultBook}. }$\Diamond$
\end{remark}

The following remarkable result from \cite{Enstad1Arxiv2022} characterizes the uniformly separated patterns in a lcsc group via their canonical groupoid $C^\ast$-algebras:

\begin{proposition}[\cite{Enstad1Arxiv2022}] Consider a lcsc group $G$ and $\Ll_0 \in \Cc(G)$. Then the groupoid $\Gg_{\Ll_0}$ is \'etale if and only if $\Ll_0$ is uniformly separated.
\end{proposition}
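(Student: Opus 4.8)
My plan is to prove both implications through the source map $\mathfrak{s}\colon\Gg_{\Ll_0}\to\Xi_{\Ll_0}$, $(g,\Ll)\mapsto\Ll$, rather than through the range map. This is harmless: the inversion $(g,\Ll)\mapsto(g^{-1},g\cdot\Ll)$ is a self-homeomorphism of $\Gg_{\Ll_0}$ and $\mathfrak{r}$ is its composition with $\mathfrak{s}$, so $\mathfrak{r}$ is a local homeomorphism exactly when $\mathfrak{s}$ is. I will freely use that every $\Ll\in\Xi_{\Ll_0}$ inherits $U$-separation from $\Ll_0$ and that $\Xi_{\Ll_0}$ is compact.

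For the direction ``uniformly separated $\Rightarrow$ \'etale'', suppose $\Ll_0$ is $U$-separated and fix $(g_0,\Ll_0')\in\Gg_{\Ll_0}$ together with a point $u_\ast\in U$. I would set $V_0:=Uu_\ast^{-1}g_0$, an open neighbourhood of $g_0$ that is a \emph{single} right translate of $U$; uniform separation then gives $|\Ll\cap V_0|\le1$ for all $\Ll\in\Xi_{\Ll_0}$. The candidate open bisection is
\[
B:=(V_0\times\Xi_{\Ll_0})\cap\Gg_{\Ll_0},
\]
an open subset of $\Gg_{\Ll_0}$ containing $(g_0,\Ll_0')$, on which $\mathfrak{s}$ is injective by the cardinality bound and has open image (meeting the open set $V_0$ is a subbasic Fell-open condition). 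The crux, and the step I expect to be hardest, is the continuity of the inverse $\Ll\mapsto(g(\Ll),\Ll)$, where $g(\Ll)$ is the unique point of $\Ll\cap V_0$; this is exactly where uniform separation does the work. Arguing sequentially (permissible since $G$ is second countable), if $\Ll_n\to\Ll$ in the image then clause (i) of Proposition~\ref{Eq:Cr1} yields $x_n\in\Ll_n$ with $x_n\to g(\Ll)$; since $V_0$ is an open neighbourhood of $g(\Ll)$, eventually $x_n\in V_0\cap\Ll_n=\{g(\Ll_n)\}$, whence $g(\Ll_n)=x_n\to g(\Ll)$. Thus each such $B$ is an open bisection, they cover $\Gg_{\Ll_0}$, and $\mathfrak{s}$ is a local homeomorphism.

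For the converse I would read off uniform separation from the local injectivity of $\mathfrak{s}$. If $\Gg_{\Ll_0}$ is \'etale, then near each unit $(e,\Ll)$ there is a basic neighbourhood $(V_\Ll\times W_\Ll)\cap\Gg_{\Ll_0}$ on which $\mathfrak{s}$ is injective; since $e\in V_\Ll\cap\Ll'$ for every $\Ll'\in W_\Ll$, injectivity forces $V_\Ll\cap\Ll'=\{e\}$ throughout $W_\Ll$. Covering the compact space $\Xi_{\Ll_0}$ by finitely many $W_{\Ll_i}$ and intersecting the corresponding $V_{\Ll_i}$ produces a single open $V\ni e$ with $V\cap\Ll'=\{e\}$ for all $\Ll'\in\Xi_{\Ll_0}$. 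Choosing open $U\ni e$ with $UU^{-1}\subseteq V$, I would verify $U$-separation: if two distinct points $x,y$ of some $\Ll'\in\Xi_{\Ll_0}$ lay in one translate $g\cdot U$, then $xy^{-1}\in UU^{-1}\subseteq V$ would be a nontrivial element of $V\cap(y\cdot\Ll')$, with $y\cdot\Ll'\in\Xi_{\Ll_0}$, contradicting $V\cap(y\cdot\Ll')=\{e\}$. Hence every element of $\Xi_{\Ll_0}$, and therefore $\Ll_0$ as a translate of one of them, is $U$-separated.

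In summary, the only genuinely delicate point is the continuity of the local section $\Ll\mapsto g(\Ll)$ in the forward direction, which is where the Fell convergence criterion and the uniform bound $|\Ll\cap V_0|\le1$ combine; the converse is essentially compactness-and-translation bookkeeping, once one recognizes local injectivity of $\mathfrak{s}$ at the units as the topological shadow of local uniform separation.
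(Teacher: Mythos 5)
Your argument is correct, but note that the paper itself contains no proof of this proposition: it is quoted verbatim from Enstad--Raum \cite{Enstad1Arxiv2022}, so the comparison is with that source rather than with anything in the text. Your proof is the natural self-contained one and matches the expected strategy: reduce to the source map via the inversion homeomorphism, build candidate neighborhoods of the form $(V_0\times\Xi_{\Ll_0})\cap\Gg_{\Ll_0}$ with $V_0$ a single right translate of $U$ (your handling of the case $e\notin U$ via the auxiliary point $u_\ast$ is a detail that is easy to overlook, since Definition~\ref{Def:US} does not require $U$ to contain the identity), and verify continuity of the local section $\Ll\mapsto g(\Ll)$ through clause (i) of Proposition~\ref{Eq:Cr1}; the sequential argument is legitimate because $\Cc(G)$ is compact Hausdorff and second countable, hence metrizable. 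The converse correctly extracts, from local injectivity of $\mathfrak{s}$ at the units, a neighborhood $V\ni e$ with $V\cap\Ll'=\{e\}$ uniformly over the compact transversal, and the passage from $V$ to $U$ with $UU^{-1}\subseteq V$ together with the observation that $xy^{-1}\in y\cdot\Ll'\in\Xi_{\Ll_0}$ is exactly right. All the background facts you ``freely use'' ($U$-separation passes to the hull, $\Xi_{\Ll_0}$ is compact, the action on $\Cc(G)$ is continuous) are indeed available in the paper. The only loose end is the degenerate case $\Ll_0=\emptyset$, where $\Xi_{\Ll_0}=\emptyset$ and your final step ``$\Ll_0$ is a translate of an element of $\Xi_{\Ll_0}$'' has no element to translate; the statement is then trivially true in both directions (the empty groupoid is \'etale and the empty set is $U$-separated for any $U$), so this costs one sentence, not an idea.
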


The important conclusion is that any of the physical system described in section~\ref{Sec:Emp} has an underlying lcsc \'etale groupoid. 

\subsection{The groupoid $C^\ast$-algebra of a uniforlmy separated pattern}

The linear space $C_c\left(\Gg_{\Ll_0}, M_N(\CM)\right)$ of compactly supported $M_N(\CM)$-valued continuous functions on $\Gg_{\Ll_0}$ can be endowed with the associative multiplication
\begin{equation}\label{Eq:Conv}
(f_1 \ast f_2)(g,\Ll) =\sum_{g' \in \Ll}f_{1}\left(g' \cdot(g,\Ll)\right)  \cdot f_{2} (g', \Ll )
\end{equation}
and $\ast$-operation
\begin{equation}\label{Eq:Inv}
f^\ast(g, \Ll) = f\left((g,\Ll)^{-1}\right)^\dagger=f(g^{-1},g\cdot \Ll)^\dagger.
\end{equation}
The resulting $\ast$-algebra is completed to a $C^\ast$-algebra by using the natural right-module structure of $C_c\left(\Gg_{\Ll_0},M_N(\CM)\right)$ over the $C^\ast$-algebra $C\left (\Xi_{\Ll_0},M_N(\CM)\right)$. The latter can be promoted to a $C^\ast$-Hilbert module structure via the $C\left(\Xi_{\Ll_0},M_N(\CM)\right)$-valued inner product
\begin{equation}\label{Eq:InnerProd1}
\langle f_1|f_2 \rangle(\Ll) :=\rho(f_1^\ast \ast f_2)(\Ll) = \sum_{g \in \Ll} f_1(g,\Ll)^{\dagger} \cdot f_2(g,\Ll),
\end{equation}
where $\rho$ is the restriction map 
\begin{equation}
\rho :C_c\left(\Gg_{\Ll_0},M_N(\CM)\right) \rightarrow C \left(\Xi_{\Ll_0},M_N(\CM)\right), \ \ (\rho f)(\Ll) = f(e,\Ll).
\end{equation}
If $E_{\Ll_0}$ is the Hilbert $C^\ast$-module completion of $C_c\left (\Gg_{\Ll_0},M_N(\CM)\right)$ in this inner product, then the convolution from the left supplies a left action of the $\ast$-algebra $C_c\left(\Gg_{\Ll_0},M_N(\CM)\right)$ on $E_{\Ll_0}$ by bounded adjointable endomorphisms, extending the action of $C_c\left(\Gg_{\Ll_0},M_N(\CM)\right)$ on itself.
\begin{definition}[\cite{KhoshkamJRAM2002}]\label{Def:RecCStar}
The reduced groupoid $C^\ast$-algebra of $\Gg_{\Ll_0}$ over the $C^\ast$-algebra $M_N(\CM)$ is the completion of the core algebra $C_c\left(\Gg_{\Ll_0},M_N(\CM)\right)$ in the norm inherited from the embedding 
$$
C_c\left(\Gg_{\Ll_0},M_N(\CM)\right) \rightarrowtail {\rm End}^\ast(E_{\Ll_0})
$$ 
and is denoted $C^\ast_r \left(\Gg_{\Ll_0},M_N(\CM)\right)$.
\end{definition}

\begin{remark}\label{Re:Iso7}{\rm If $N=1$, we simplify the notation to $C^\ast_r (\Gg_{\Ll_0})$ and refer to this algebra as {\it the} groupoid $C^\ast$-algebra of $\Gg_{\Ll_0}$. There is a standard isomorphism $C^\ast_r \left(\Gg_{\Ll_0},M_N(\CM)\right) \simeq M_N(\CM) \otimes C^\ast_r (\Gg_{\Ll_0})$, so those more general $C^\ast$-algebras appearing in Definition~\ref{Def:RecCStar} are just matrix amplifications of the groupoid $C^\ast$-algebra of $\Gg_{\Ll_0}$. We recall that such matrix amplifications have no impact on the K-theory. The isomorphism we just mentioned will come handy in section~\ref{Sec:SortingD}, where we make the connection with the models discussed in section~\ref{Sec:Emp}. }
$\Diamond$
\end{remark}

\begin{remark}{\rm For a second countable, Hausdorff, \'etale groupoid $\Gg$, the reduced $C^\ast$-algebra is separable. In turn, this implies that its $K$-groups are countable \cite{BlackadarBook1998} and this is of fundamental importance for the sought applications because it enables a sensible enumeration of the topological dynamical states supported by a class of metamaterials. }$\Diamond$
\end{remark}

\begin{remark}{\rm For non-amenable groupoids, their reduced and full $C^\ast$-algebras are distinct. For the concrete applications discussed in section~\ref{Sec:Emp}, it is the reduced $C^\ast$-algebra that is relevant.}$\Diamond$
\end{remark}

\begin{example}\label{Ex:RA}{\rm Given the discussion in Example~\ref{Ex:RG}, it is immediate to see that, when the pattern is generated using a wallpaper group $G$ as in Examples~\ref{Ex:P2} and \ref{Ex:P4}, there is an isomorphism $C^{*}_{r}(\mathcal{G}_{\mathcal{L}_{0}},M_{N}(\mathbb{C}))\simeq M_N(\CM) \otimes C^\ast_r(G)$, where the latter is the reduced group $C^\ast$-algebra of $G$.
}$\Diamond$
\end{example}

\begin{remark}{\rm If the patterns from Examples~\ref{Ex:P2} and \ref{Ex:P4} are seen as closed subsets of $\RM^2$, as discussed in Remark~\ref {Re:GvsZ}, then the associated groupoid algebra will be  $M_{\alpha N}(\CM) \otimes C^\ast_r(\ZM^2)$, where $\alpha=2$ and 4, respectively. These $C^\ast$-algebras are strictly larger than the group $C^\ast$-algebras of the wallpaper groups that generated the patterns in the first place.}$\Diamond$
\end{remark}

\begin{example}{\rm The reduced groupoid $C^\ast$-algebra corresponding to the tranformation groupoid of a $G$-space coincides with the corresponding reduced crossed product $C^\ast$-algebra \cite[Example~1.51]{WilliamsBook2}. Therefore, for the setting discussed in Example~\ref{Ex:P2Disorder}, the canonical groupoid $C^\ast$-algebra reduces to the crossed product algebra associated to the dynamical system $\left({\rm p2},\left( B_\epsilon \times [-\epsilon,\epsilon] \right)^{\times \rm p2}\right)$.
}$\Diamond$
\end{example}

\begin{example}{\rm For the pattern from Example~\ref{Ex:1DQP}, the groupoid algebra reduces to the tensor product of $M_N(\CM)$ and the crossed product $C(\SM^1) \rtimes \ZM$, where the action of $\ZM$ is by rotation by $\theta$. 
}$\Diamond$
\end{example}

\subsection{The left-regular representations}
\label{Sec:LReps}

The restriction map $\rho$ introduced above extends by continuity to a positive map between $C^\ast$-algebras. By composing $\rho$ with the evaluation maps
\begin{equation}
j_{\Ll}: C\left(\Xi_{\Ll_0},M_N(\CM)\right) \to M_N(\CM), \ \ j_\Ll(\chi) = \chi(\Ll)
\end{equation}
for some $\Ll \in \Xi_{\Ll_0}$, one obtains a family of $M_N(\CM)$-valued positive maps. By further composing with the trace state on $M_N(\CM)$, one obtains a family of states
\begin{equation}
\rho_\Ll = {\rm Tr} \circ j_\Ll \circ \rho
\end{equation}
on $C^\ast_r\left(\Gg_{\Ll_0},M_N(\CM)\right) $, indexed by the transversal $\Xi_{\Ll_0}$. The GNS-representations corresponding to these states supply the left-regular representations of the $C^\ast$ algebra $C^\ast_r\left(\Gg_{\Ll_0},M_N(\CM)\right) $.

\subsection{Sorting dynamics with groupoid algebras}\label{Sec:SortingD}

We are now in the position to make the connection with the dynamical matrices~\eqref{Eq:FinalD} analyzed in section~\ref{Sec:Emp}. We recall that our stated goal is to determine the smallest $C^\ast$-algebra which generates all dynamical matrices associated to a given aperiodic architecture of identical resonators with adjustable internal structure.

We specialize the discussion to the particular case when the lcsc group $G$ is ${\rm Iso}(\EM^d)$ and $\Ll_0$ is one of its uniformly separated patterns, representing a specific configuration of identical resonators.

The Hilbert space corresponding to GNS-representation induced by $\rho_\Ll$ is 
\begin{equation}
\Hh_\Ll : =  \ell^2\left(\mathfrak s^{-1}(\Ll), \CM^N \right) = \ell^2\left(\Ll, \CM^N \right),
\end{equation}
the space of $\CM^N$-valued square summable sequences over $\Ll$, and the representation acts explicitly as
\begin{equation}\label{Eq:PiL}
[\pi_\Ll(f) \varphi](g') = \sum_{g \in \Ll}f\left(g \cdot (g',\Ll)\right)  \cdot \varphi (g), \quad g' \in \Ll, \ \varphi \in \ell^2\left(\Ll, \CM^N \right),
\end{equation}
which follows directly from Eq.~\eqref{Eq:Conv}. In particular,\footnote{Here, $\alpha \otimes |g'\rangle$ represents the map $\varphi(g) = \alpha \, \delta_{g,g'}$.}
\begin{equation}
\pi_\Ll(f)(\alpha \otimes |g'\rangle) = \sum_{g \in \Ll}f\left(g' \cdot (g,\Ll)\right)\cdot \alpha \otimes |g\rangle, \quad \alpha \in \CM^N.
\end{equation}
Below, we reproduce the expression~\eqref{Eq:FinalD} of a generic dynamical matrix
\begin{equation}\label{Eq:DL}
D_{\Ll} =\sum_{x,x' \in \Ll} w_{x' \cdot x,e}(x' \cdot \Ll) \otimes |x\rangle \langle x'|
\end{equation}
and, obviously,
\begin{equation}
D_{\Ll} (\alpha \otimes |x'\rangle)=\sum_{x \in \Ll} w_{x' \cdot x,e}(x' \cdot \Ll) \cdot \alpha \otimes |x\rangle.
\end{equation}
We now recall the discussion from subsection~\ref{Sec:Contr} and, by comparing Eqs.~\eqref{Eq:PiL} and \eqref{Eq:DL}, we see that a dynamical matrix with finite coupling range can be generated as the left-regular representation of an element $f$ from the core sub-algebra, if we take $f(g,\Ll) := w_{g,e}(\Ll)$ for any $(g,\Ll) \in \Gg_{\Ll_0}$. Since these sets are dense in both the space of acceptable dynamical matrices and the groupoid $C^\ast$-algebra, we can conclude that $C^\ast_r\left(\Gg_\Ll,M_N(\CM)\right)$ generate all dynamical matrices that can be controlled in a laboratory, for a fixed number of internal degrees of freedom.

Various dynamical effects, such as topological pumping, topological phase transitions, etc., require deformations of the internal structures of the resonators. During such deformations, the degrees of freedom carried by the seed resonator may be continuously turned on or off. Hence, we need to define a $C^\ast$-algebra that covers such situations and, furthermore, gives a precise sense to what it means to ``continuously'' turn on and off a degree of freedom. For this, we recall the isomorphism from Remark~\ref{Re:Iso7} and note the following canonical embedding 
\begin{equation}\label{eq: cpt-embed}
C^\ast_r(\Gg_{\Ll_0},M_N(\CM)) \simeq M_N(\CM) \otimes C^\ast_r(\Gg_{\Ll_0}) \hookrightarrow \KM \otimes C^\ast_r(\Gg_{\Ll_0}),
\end{equation}
where $\KM$ is the algebra of compact operators over $\ell^2(\Ll)$. Thus, $\KM \otimes C^\ast_r(\Gg_{\Ll_0})$ is a natural solution for the problem we just described. Furthermore, as argued in subsection~\ref{Sec:LAh}, this is the smallest $C^\ast$-algebra among the possible solutions, hence, it is the algebra we were actually looking for. At this point, we have justified the statements 1) through 5) in our introductory section~\ref{Sec:Introduction} and statement 6) is a direct consequence of those statements.

\section{Outlook}\label{Sec:Outlook}

This section indicates several directions in materials science and research that can be efficiently investigated with the formalism developed by our work.

Firstly, if the dynamical matrix $D_{\Ll_0}$ displays an isolated band $\Delta$ of resonant spectrum, then we can define the spectral projection $P_\Delta$ onto $\Delta$, usually called the band projection and observe the time evolution of a state $\psi$ excited from $P_\Delta \Hh_{\Ll_0}$. Since this time evolution cannot escape this subspace, $P_\Delta \mathcal H_{\Ll_0}$ defines an invariant for dynamics. As it is well known, these invariants are accounted for by the $K_0$-theory of $C^\ast_r(\Gg_{\Ll_0})$, and here is where the $C^\ast$-algebraic framework can make fine contributions to our understanding of the dynamics of these physical system \cite{Bellissard1986,Bellissard1995}. Our formalism opens such lines of research for contexts that were not explored yet.

Secondly, we already mentioned that the bulk-defect correspondence principle can be formulated very generally and precisely using groupoids, to a point where a full classification of materials defects can be attempted \cite{BourneAHP2020,ProdanJPA2021}. There is already plenty of evidence that possible point symmetries of defects can highly enhance the bulk-defect correspondences \cite{ChiuRMP2016}. For example, the point-defects of the patterns examined in Examples~\ref{Ex:P2} and \ref{Ex:P4}, can still display a symmetry relative to a point group. If this is the case, then our formalism leads to a defect $C^\ast$-algebra that is stably isomorphic to the group $C^\ast$-algebra of this point group. This means that the bulk-defect correspondence lands in the representation ring of this point group, thus possibly leading to interesting topological defect states. In contradistinction, in a formalism that only engages the group of pure translations,  the defect $C^\ast$-algebra is always isomorphic to $\KM$, hence no such refined predictions are possible. 

Thirdly, the mathematical framework can be applied to patterns of resonators generated with other lcsc groups, but one has to be careful on the physics side. For example, fractal patterns engage the scaling transformation of the Euclidean space. Thus, an interesting direction will be to upgrade from ${\rm Iso}(\EM^d)$ to the sub-group ${\rm Sim}(\EM^d)$ of the affine transformations generated by the isometries and dilation. In this case, however, the equivariance~\eqref{Eq:EquiV2} is no longer ensured by natural laws and, instead, it needs to be enforced by finely tuning the internal structure of the resonators, which, nevertheless, is within the reach of the present manufacturing capabilities. 

Lastly, the cyclic cohomology of the groupoid $C^\ast$-algebra and its pairing with the K-theory \cite{ConnesIHESPM1985} supplies the venue for us to discover the topological responses to external stimuli for entirely new classes of materials, {\it e.g.} by following \cite{BourneAHP2020,BourneJPA2018} as models of analysis.

\section{Appendix: The Euclidean spaces and their groups of isometries}\label{Appendix} 

At the conceptual level, the resonators live in the group of isometries of their ambient physical spaces. As such, we collect in this appendix some known facts about the algebraic and topological characteristics of these groups. The information will come handy when we discuss particular examples, though, it will play little role in the main formal developments.

\subsection{Generic facts} The $d$-dimensional Euclidean space $\EM^d$ consists of the set $\RM^d$ equipped with the metric
\begin{equation} 
{\rm d}(x,y) : = \left[\sum\nolimits_{j=1}^d(x_j -y_j)^2\right]^\frac{1}{2}.
\end{equation}
We will consistently represent a point of $\RM^d$ as a column vector with $d$ entries and use ``$\cdot$'' to indicate matrix multiplications. Furthermore, we let ${\rm Homeo}(\EM^d)$ denote the group of homeomorphims of $\EM^d$.

\begin{proposition}[\cite{ArensAJM1946}]  The set ${\rm Homeo}(\EM^d)$ equipped with the compact-open topology inherited from $\EM^d$ is a topological group, {\it i.e.} multiplication and inversion maps are both continuous in this topology.
\end{proposition}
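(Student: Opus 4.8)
The plan is to verify the two defining continuity properties of a topological group separately, exploiting that $\EM^d = \RM^d$ is locally compact, Hausdorff, and locally connected. Recall that the compact--open topology on ${\rm Homeo}(\EM^d)$ has as a subbasis the sets $W(K,U) = \{f : f(K) \subseteq U\}$ with $K \subseteq \EM^d$ compact and $U \subseteq \EM^d$ open. I would first dispense with continuity of composition, which uses only local compactness, and then concentrate the effort on continuity of inversion, which is the genuinely delicate point.

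For composition, fix $f,g$ and a subbasic neighborhood $W(K,U)$ of $g \circ f$, so that $f(K)$ is a compact subset of the open set $g^{-1}(U)$. By local compactness of $\EM^d$ I can interpolate an open set $V$ with $f(K) \subseteq V \subseteq \overline V \subseteq g^{-1}(U)$ and $\overline V$ compact. Then $W(K,V)$ and $W(\overline V,U)$ are neighborhoods of $f$ and $g$ respectively, and for any $f' \in W(K,V)$, $g' \in W(\overline V,U)$ one has $f'(K) \subseteq V \subseteq \overline V$, whence $g'(f'(K)) \subseteq g'(\overline V) \subseteq U$, so $g' \circ f' \in W(K,U)$. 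Intersecting over the finitely many subbasic constraints that define a basic neighborhood of $g \circ f$ shows composition is continuous.

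For inversion the cleanest route is to pass to the one-point compactification $\SM^d = \EM^d \cup \{\infty\}$. Since a homeomorphism of $\EM^d$ is automatically proper, every $f \in {\rm Homeo}(\EM^d)$ extends uniquely to a homeomorphism $\hat f$ of $\SM^d$ fixing $\infty$, and this identifies ${\rm Homeo}(\EM^d)$ with the stabilizer of $\infty$ inside ${\rm Homeo}(\SM^d)$. On the compact Hausdorff space $\SM^d$ inversion is easy: given $h$ and a subbasic neighborhood $W(K,U)$ of $h^{-1}$, the condition $h^{-1}(K) \subseteq U$ is equivalent to $h(\SM^d \setminus U) \subseteq \SM^d \setminus K$, and since $\SM^d \setminus U$ is now compact and $\SM^d \setminus K$ is open, the set $W(\SM^d \setminus U, \SM^d \setminus K)$ is a subbasic neighborhood of $h$ whose image under inversion lies in $W(K,U)$. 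Hence ${\rm Homeo}(\SM^d)$ is a topological group, and any subgroup of a topological group is again one in the subspace topology.

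The main obstacle, and the step deserving the most care, is showing that the identification ${\rm Homeo}(\EM^d) \hookrightarrow {\rm Homeo}(\SM^d)$ is a homeomorphism onto its image, i.e.\ that the compact--open topology defined using compacta and opens of $\EM^d$ agrees with the subspace topology inherited from that of $\SM^d$. Controlling the behavior of $f$ and $f^{-1}$ near $\infty$ is exactly where local connectedness of $\EM^d$ enters: one needs that perturbations of $f$ which are small on large compacta cannot let $f^{-1}(K)$ escape a prescribed open trap, which rests on the fact that a bounded region of $\EM^d$ is separated from $\infty$ by a compact frame $\partial V$ whose image continues to separate under small perturbations. This separation argument is the analytic heart of Arens' theorem \cite{ArensAJM1946}; I would either invoke it directly or reproduce the trapping estimate in the concrete Euclidean setting, where it follows from elementary properties of balls.
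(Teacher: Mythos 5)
Your proposal is correct and takes essentially the same route as the paper: the paper states the result by citation to Arens and, in Remark~\ref{Re:COTop}, justifies it by exactly your mechanism --- ${\rm Homeo}$ of a compact Hausdorff space is automatically a topological group under the compact-open topology, and the compact-open topology on ${\rm Homeo}(\EM^d)$ coincides with the one inherited from the one-point compactification $\alpha\EM^d$ (citing Dijkstra). Your deferral of the one delicate step, the trapping/separation argument near $\infty$ where local connectedness enters, to \cite{ArensAJM1946} is likewise what the paper does, so nothing further is required.
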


\begin{remark}\label{Re:COTop}{\rm Certain fine aspects about the above statement are worth recalling. If a topological Hausdorff space is compact, then its group of homeomorphisms is automatically a topological group when equipped with the compact-open topology. This is not always the case for non-compact space. It is true in this particular case because the compact-open topology on ${\rm Homeo}(\EM^d)$ coincides with the one inherited from the one-point compactification $\alpha \EM^d$ of the Euclidean space \cite{DijkstraAMM2005}.
}$\Diamond$
\end{remark} 

\begin{definition}\label{Def:E2} The Euclidean group ${\rm Iso}(\EM^d)$ is defined as the topological sub-group of ${\rm Homeo}(\EM^d)$ containing the set of isometric homeomorphisms,
\begin{equation}
    \Iso(\EM^d )
    : = \left\{ f \in {\rm Homeo}\left( \EM^2\right) ~|~  {\rm d}( f(x), f(y) ) = {\rm d}(x, y)~ \forall \ x,y \in \EM^d 
    \right\}.
\end{equation}
By definition, the topology of ${\rm Iso}(\EM^d)$ is that inherited from ${\rm Homeo}(\EM^d)$.
\end{definition}

\subsection{Structure and topology of the Euclidean groups} The topological group $\RM^d$ acts by translations on $\EM^d$ and is a normal sub-group of ${\rm Iso}(\EM^d)$. The topological group $O(d)$ acts by rotations on $\EM^d$ and is also a sub-group of ${\rm Iso}(\EM^d)$, though not a normal one. It consists of the set of orthogonal matrices
\begin{equation}
O(d) = \{ \bm r \in M_{d}(\RM), \ \bm r \cdot \bm r^T = \bm r^T \cdot \bm r = I_{d} \}
\end{equation}
and the topology inherited from ${\rm Homeo}(\EM^d)$ coincides with the one induced by the metric
\begin{equation}
{\rm d}_O(\bm r, \bm r') = \| \bm r - \bm r' \|_{\rm HS},
\end{equation}
where $\| \cdot \|_{\rm HS}$ is the Hilbert-Schmidt norm. The Euclidean group can be presented as the semi-direct product
\begin{equation}
{\rm Iso}(\EM^d) = \RM^d \rtimes_\beta O(d), \quad \beta_{\bm r}(v) = \bm r \cdot v.
\end{equation}
Thus, an element of ${\rm Iso}(\EM^d)$ can be uniquely presented as a tuple $(v,\bm r)$, with $v \in \RM^d$ and $\bm r \in O(d)$, and  composition and inversion rules
\begin{equation}
(v_1,\bm r_1)(v_2,\bm r_2) = (v_1+\bm r_1 \cdot v_2,\bm r_1 \cdot \bm r_2), \quad (v,\bm r)^{-1} = (-\bm r^{-1}\cdot v, \bm r^{-1}).
\end{equation}
We will reserve the symbol $\bm 1$ for the neutral element $(0,I_d)$ of ${\rm Iso}(\EM^d)$. As a topological space, ${\rm Iso}(\EM^d) = \RM^d \times O(d)$, with the group appearing on the right reduced to a topological space.

Regarding the structure of $O(d)$, any orthogonal matrix has determinant ${\rm det}(\bm r) = \pm 1$ and, since the determinant is a continuous map, this implies that $O(d)$ has at least two connected components, the pre-images of $\pm 1$ through the determinant map, which turn out to be the only connected components of $O(d)$ \cite{GuptaMS2018}. Furthermore, since the determinant is a topological group morphism, the connected component of the neutral element $I_{d}$ is a normal sub-group of $O(N)$, called the special orthogonal group and denoted by $SO(N)$. This leads to the following split exact sequence
\begin{equation}\label{Eq:Seq1}
\{e\} \to SO(d) \to O(d) \rightarrow O(d)/SO(d) \simeq \ZM_2 \to \{e\}.
\end{equation}

\subsubsection{Dimension $d=2$} In this case, the generator of $\ZM_2$ can be embedded in $O(2)$ via the matrix $\bm p=\begin{pmatrix} 0 & 1 \\ 1 & 0 \end{pmatrix}$, which has a nontrivial action by conjugation. Thus, $O(2)$ is the internal semi-direct product $O(2) = SO(2) \rtimes \left\langle \bm p, I_{2 \times 2} \right\rangle$ and, as a topological space,
\begin{equation}
O(2) \simeq \SM^1 \times \{-1,+1\}.
\end{equation}
Hence, as a topological space,
\begin{equation}\label{Eq:E2Topo}
{\rm Iso}(\EM^2) \simeq \RM^2 \times \SM^1 \times \{-1,+1\}.
\end{equation}

\subsubsection{Dimension $d=3$} In this case, the exact sequence~\eqref{Eq:Seq1} leads to a direct product $O(3)=SO(3) \times \{I_{d\times d},-I_{d\times d}\}$ and to a topological space, $O(3) \simeq \RM \PM^3 \times \{-1,+1\}$. Hence, as a topological space,
\begin{equation}\label{Eq:E3Topo}
{\rm Iso}(\EM^3) \simeq \RM^3 \times \RM \PM^3 \times \{-1,+1\}.
\end{equation}

\end{document}